\title{\bf Solving Equations Using Khovanskii Bases}
\author{Barbara Betti, Marta Panizzut and Simon Telen}
\date{}
\def\PYGtango@reset{\let\PYGtango@it=\relax \let\PYGtango@bf=\relax%
    \let\PYGtango@ul=\relax \let\PYGtango@tc=\relax%
    \let\PYGtango@bc=\relax \let\PYGtango@ff=\relax}
\def\PYGtango@tok#1{\csname PYGtango@tok@#1\endcsname}
\def\PYGtango@toks#1+{\ifx\relax#1\empty\else%
    \PYGtango@tok{#1}\expandafter\PYGtango@toks\fi}
\def\PYGtango@do#1{\PYGtango@bc{\PYGtango@tc{\PYGtango@ul{%
    \PYGtango@it{\PYGtango@bf{\PYGtango@ff{#1}}}}}}}
\def\PYGtango#1#2{\PYGtango@reset\PYGtango@toks#1+\relax+\PYGtango@do{#2}}
\def\csname PYGtango@tok@w\endcsname{\let\PYGtango@ul=\underline\def\PYGtango@tc##1{\textcolor[rgb]{0.97,0.97,0.97}{##1}}}
\def\csname PYGtango@tok@err\endcsname{\def\PYGtango@tc##1{\textcolor[rgb]{0.64,0.00,0.00}{##1}}\def\PYGtango@bc##1{\setlength{\fboxsep}{0pt}\fcolorbox[rgb]{0.94,0.16,0.16}{1,1,1}{\strut ##1}}}
\def\csname PYGtango@tok@x\endcsname{\def\PYGtango@tc##1{\textcolor[rgb]{0.00,0.00,0.00}{##1}}}
\def\csname PYGtango@tok@c\endcsname{\let\PYGtango@it=\textit\def\PYGtango@tc##1{\textcolor[rgb]{0.56,0.35,0.01}{##1}}}
\def\csname PYGtango@tok@cm\endcsname{\let\PYGtango@it=\textit\def\PYGtango@tc##1{\textcolor[rgb]{0.56,0.35,0.01}{##1}}}
\def\csname PYGtango@tok@cp\endcsname{\let\PYGtango@it=\textit\def\PYGtango@tc##1{\textcolor[rgb]{0.56,0.35,0.01}{##1}}}
\def\csname PYGtango@tok@c1\endcsname{\let\PYGtango@it=\textit\def\PYGtango@tc##1{\textcolor[rgb]{0.56,0.35,0.01}{##1}}}
\def\csname PYGtango@tok@cs\endcsname{\let\PYGtango@it=\textit\def\PYGtango@tc##1{\textcolor[rgb]{0.56,0.35,0.01}{##1}}}
\def\csname PYGtango@tok@k\endcsname{\let\PYGtango@bf=\textbf\def\PYGtango@tc##1{\textcolor[rgb]{0.13,0.29,0.53}{##1}}}
\def\csname PYGtango@tok@kc\endcsname{\let\PYGtango@bf=\textbf\def\PYGtango@tc##1{\textcolor[rgb]{0.13,0.29,0.53}{##1}}}
\def\csname PYGtango@tok@kd\endcsname{\let\PYGtango@bf=\textbf\def\PYGtango@tc##1{\textcolor[rgb]{0.13,0.29,0.53}{##1}}}
\def\csname PYGtango@tok@kn\endcsname{\let\PYGtango@bf=\textbf\def\PYGtango@tc##1{\textcolor[rgb]{0.13,0.29,0.53}{##1}}}
\def\csname PYGtango@tok@kp\endcsname{\let\PYGtango@bf=\textbf\def\PYGtango@tc##1{\textcolor[rgb]{0.13,0.29,0.53}{##1}}}
\def\csname PYGtango@tok@kr\endcsname{\let\PYGtango@bf=\textbf\def\PYGtango@tc##1{\textcolor[rgb]{0.13,0.29,0.53}{##1}}}
\def\csname PYGtango@tok@kt\endcsname{\let\PYGtango@bf=\textbf\def\PYGtango@tc##1{\textcolor[rgb]{0.13,0.29,0.53}{##1}}}
\def\csname PYGtango@tok@o\endcsname{\let\PYGtango@bf=\textbf\def\PYGtango@tc##1{\textcolor[rgb]{0.81,0.36,0.00}{##1}}}
\def\csname PYGtango@tok@ow\endcsname{\let\PYGtango@bf=\textbf\def\PYGtango@tc##1{\textcolor[rgb]{0.13,0.29,0.53}{##1}}}
\def\csname PYGtango@tok@p\endcsname{\let\PYGtango@bf=\textbf\def\PYGtango@tc##1{\textcolor[rgb]{0.00,0.00,0.00}{##1}}}
\def\csname PYGtango@tok@n\endcsname{\def\PYGtango@tc##1{\textcolor[rgb]{0.00,0.00,0.00}{##1}}}
\def\csname PYGtango@tok@na\endcsname{\def\PYGtango@tc##1{\textcolor[rgb]{0.77,0.63,0.00}{##1}}}
\def\csname PYGtango@tok@nb\endcsname{\def\PYGtango@tc##1{\textcolor[rgb]{0.13,0.29,0.53}{##1}}}
\def\csname PYGtango@tok@bp\endcsname{\def\PYGtango@tc##1{\textcolor[rgb]{0.20,0.40,0.64}{##1}}}
\def\csname PYGtango@tok@nc\endcsname{\def\PYGtango@tc##1{\textcolor[rgb]{0.00,0.00,0.00}{##1}}}
\def\csname PYGtango@tok@no\endcsname{\def\PYGtango@tc##1{\textcolor[rgb]{0.00,0.00,0.00}{##1}}}
\def\csname PYGtango@tok@nd\endcsname{\let\PYGtango@bf=\textbf\def\PYGtango@tc##1{\textcolor[rgb]{0.36,0.21,0.80}{##1}}}
\def\csname PYGtango@tok@ni\endcsname{\def\PYGtango@tc##1{\textcolor[rgb]{0.81,0.36,0.00}{##1}}}
\def\csname PYGtango@tok@ne\endcsname{\let\PYGtango@bf=\textbf\def\PYGtango@tc##1{\textcolor[rgb]{0.80,0.00,0.00}{##1}}}
\def\csname PYGtango@tok@nf\endcsname{\def\PYGtango@tc##1{\textcolor[rgb]{0.00,0.00,0.00}{##1}}}
\def\csname PYGtango@tok@py\endcsname{\def\PYGtango@tc##1{\textcolor[rgb]{0.00,0.00,0.00}{##1}}}
\def\csname PYGtango@tok@nl\endcsname{\def\PYGtango@tc##1{\textcolor[rgb]{0.96,0.47,0.00}{##1}}}
\def\csname PYGtango@tok@nn\endcsname{\def\PYGtango@tc##1{\textcolor[rgb]{0.00,0.00,0.00}{##1}}}
\def\csname PYGtango@tok@nx\endcsname{\def\PYGtango@tc##1{\textcolor[rgb]{0.00,0.00,0.00}{##1}}}
\def\csname PYGtango@tok@nt\endcsname{\let\PYGtango@bf=\textbf\def\PYGtango@tc##1{\textcolor[rgb]{0.13,0.29,0.53}{##1}}}
\def\csname PYGtango@tok@nv\endcsname{\def\PYGtango@tc##1{\textcolor[rgb]{0.00,0.00,0.00}{##1}}}
\def\csname PYGtango@tok@vc\endcsname{\def\PYGtango@tc##1{\textcolor[rgb]{0.00,0.00,0.00}{##1}}}
\def\csname PYGtango@tok@vg\endcsname{\def\PYGtango@tc##1{\textcolor[rgb]{0.00,0.00,0.00}{##1}}}
\def\csname PYGtango@tok@vi\endcsname{\def\PYGtango@tc##1{\textcolor[rgb]{0.00,0.00,0.00}{##1}}}
\def\csname PYGtango@tok@l\endcsname{\def\PYGtango@tc##1{\textcolor[rgb]{0.00,0.00,0.00}{##1}}}
\def\csname PYGtango@tok@m\endcsname{\let\PYGtango@bf=\textbf\def\PYGtango@tc##1{\textcolor[rgb]{0.00,0.00,0.81}{##1}}}
\def\csname PYGtango@tok@mf\endcsname{\let\PYGtango@bf=\textbf\def\PYGtango@tc##1{\textcolor[rgb]{0.00,0.00,0.81}{##1}}}
\def\csname PYGtango@tok@mh\endcsname{\let\PYGtango@bf=\textbf\def\PYGtango@tc##1{\textcolor[rgb]{0.00,0.00,0.81}{##1}}}
\def\csname PYGtango@tok@mi\endcsname{\let\PYGtango@bf=\textbf\def\PYGtango@tc##1{\textcolor[rgb]{0.00,0.00,0.81}{##1}}}
\def\csname PYGtango@tok@il\endcsname{\let\PYGtango@bf=\textbf\def\PYGtango@tc##1{\textcolor[rgb]{0.00,0.00,0.81}{##1}}}
\def\csname PYGtango@tok@mo\endcsname{\let\PYGtango@bf=\textbf\def\PYGtango@tc##1{\textcolor[rgb]{0.00,0.00,0.81}{##1}}}
\def\csname PYGtango@tok@ld\endcsname{\def\PYGtango@tc##1{\textcolor[rgb]{0.00,0.00,0.00}{##1}}}
\def\csname PYGtango@tok@s\endcsname{\def\PYGtango@tc##1{\textcolor[rgb]{0.31,0.60,0.02}{##1}}}
\def\csname PYGtango@tok@sb\endcsname{\def\PYGtango@tc##1{\textcolor[rgb]{0.31,0.60,0.02}{##1}}}
\def\csname PYGtango@tok@sc\endcsname{\def\PYGtango@tc##1{\textcolor[rgb]{0.31,0.60,0.02}{##1}}}
\def\csname PYGtango@tok@sd\endcsname{\let\PYGtango@it=\textit\def\PYGtango@tc##1{\textcolor[rgb]{0.56,0.35,0.01}{##1}}}
\def\csname PYGtango@tok@s2\endcsname{\def\PYGtango@tc##1{\textcolor[rgb]{0.31,0.60,0.02}{##1}}}
\def\csname PYGtango@tok@se\endcsname{\def\PYGtango@tc##1{\textcolor[rgb]{0.31,0.60,0.02}{##1}}}
\def\csname PYGtango@tok@sh\endcsname{\def\PYGtango@tc##1{\textcolor[rgb]{0.31,0.60,0.02}{##1}}}
\def\csname PYGtango@tok@si\endcsname{\def\PYGtango@tc##1{\textcolor[rgb]{0.31,0.60,0.02}{##1}}}
\def\csname PYGtango@tok@sx\endcsname{\def\PYGtango@tc##1{\textcolor[rgb]{0.31,0.60,0.02}{##1}}}
\def\csname PYGtango@tok@sr\endcsname{\def\PYGtango@tc##1{\textcolor[rgb]{0.31,0.60,0.02}{##1}}}
\def\csname PYGtango@tok@s1\endcsname{\def\PYGtango@tc##1{\textcolor[rgb]{0.31,0.60,0.02}{##1}}}
\def\csname PYGtango@tok@ss\endcsname{\def\PYGtango@tc##1{\textcolor[rgb]{0.31,0.60,0.02}{##1}}}
\def\csname PYGtango@tok@g\endcsname{\def\PYGtango@tc##1{\textcolor[rgb]{0.00,0.00,0.00}{##1}}}
\def\csname PYGtango@tok@gd\endcsname{\def\PYGtango@tc##1{\textcolor[rgb]{0.64,0.00,0.00}{##1}}}
\def\csname PYGtango@tok@ge\endcsname{\let\PYGtango@it=\textit\def\PYGtango@tc##1{\textcolor[rgb]{0.00,0.00,0.00}{##1}}}
\def\csname PYGtango@tok@gr\endcsname{\def\PYGtango@tc##1{\textcolor[rgb]{0.94,0.16,0.16}{##1}}}
\def\csname PYGtango@tok@gh\endcsname{\let\PYGtango@bf=\textbf\def\PYGtango@tc##1{\textcolor[rgb]{0.00,0.00,0.50}{##1}}}
\def\csname PYGtango@tok@gi\endcsname{\def\PYGtango@tc##1{\textcolor[rgb]{0.00,0.63,0.00}{##1}}}
\def\csname PYGtango@tok@go\endcsname{\let\PYGtango@it=\textit\def\PYGtango@tc##1{\textcolor[rgb]{0.00,0.00,0.00}{##1}}}
\def\csname PYGtango@tok@gp\endcsname{\def\PYGtango@tc##1{\textcolor[rgb]{0.56,0.35,0.01}{##1}}}
\def\csname PYGtango@tok@gs\endcsname{\let\PYGtango@bf=\textbf\def\PYGtango@tc##1{\textcolor[rgb]{0.00,0.00,0.00}{##1}}}
\def\csname PYGtango@tok@gu\endcsname{\let\PYGtango@bf=\textbf\def\PYGtango@tc##1{\textcolor[rgb]{0.50,0.00,0.50}{##1}}}
\def\csname PYGtango@tok@gt\endcsname{\let\PYGtango@bf=\textbf\def\PYGtango@tc##1{\textcolor[rgb]{0.64,0.00,0.00}{##1}}}
\def\csname PYGtango@tok@fm\endcsname{\def\PYGtango@tc##1{\textcolor[rgb]{0.00,0.00,0.00}{##1}}}
\def\csname PYGtango@tok@vm\endcsname{\def\PYGtango@tc##1{\textcolor[rgb]{0.00,0.00,0.00}{##1}}}
\def\csname PYGtango@tok@sa\endcsname{\def\PYGtango@tc##1{\textcolor[rgb]{0.31,0.60,0.02}{##1}}}
\def\csname PYGtango@tok@dl\endcsname{\def\PYGtango@tc##1{\textcolor[rgb]{0.31,0.60,0.02}{##1}}}
\def\csname PYGtango@tok@mb\endcsname{\let\PYGtango@bf=\textbf\def\PYGtango@tc##1{\textcolor[rgb]{0.00,0.00,0.81}{##1}}}
\def\csname PYGtango@tok@ch\endcsname{\let\PYGtango@it=\textit\def\PYGtango@tc##1{\textcolor[rgb]{0.56,0.35,0.01}{##1}}}
\def\csname PYGtango@tok@cpf\endcsname{\let\PYGtango@it=\textit\def\PYGtango@tc##1{\textcolor[rgb]{0.56,0.35,0.01}{##1}}}
\def\PYG@reset{\let\PYG@it=\relax \let\PYG@bf=\relax%
    \let\PYG@ul=\relax \let\PYG@tc=\relax%
    \let\PYG@bc=\relax \let\PYG@ff=\relax}
\def\PYG@tok#1{\csname PYG@tok@#1\endcsname}
\def\PYG@toks#1+{\ifx\relax#1\empty\else%
    \PYG@tok{#1}\expandafter\PYG@toks\fi}
\def\PYG@do#1{\PYG@bc{\PYG@tc{\PYG@ul{%
    \PYG@it{\PYG@bf{\PYG@ff{#1}}}}}}}
\def\PYG#1#2{\PYG@reset\PYG@toks#1+\relax+\PYG@do{#2}}
\def\csname PYG@tok@w\endcsname{\def\PYG@tc##1{\textcolor[rgb]{0.73,0.73,0.73}{##1}}}
\def\csname PYG@tok@c\endcsname{\let\PYG@it=\textit\def\PYG@tc##1{\textcolor[rgb]{0.25,0.50,0.50}{##1}}}
\def\csname PYG@tok@cp\endcsname{\def\PYG@tc##1{\textcolor[rgb]{0.74,0.48,0.00}{##1}}}
\def\csname PYG@tok@k\endcsname{\let\PYG@bf=\textbf\def\PYG@tc##1{\textcolor[rgb]{0.00,0.50,0.00}{##1}}}
\def\csname PYG@tok@kp\endcsname{\def\PYG@tc##1{\textcolor[rgb]{0.00,0.50,0.00}{##1}}}
\def\csname PYG@tok@kt\endcsname{\def\PYG@tc##1{\textcolor[rgb]{0.69,0.00,0.25}{##1}}}
\def\csname PYG@tok@o\endcsname{\def\PYG@tc##1{\textcolor[rgb]{0.40,0.40,0.40}{##1}}}
\def\csname PYG@tok@ow\endcsname{\let\PYG@bf=\textbf\def\PYG@tc##1{\textcolor[rgb]{0.67,0.13,1.00}{##1}}}
\def\csname PYG@tok@nb\endcsname{\def\PYG@tc##1{\textcolor[rgb]{0.00,0.50,0.00}{##1}}}
\def\csname PYG@tok@nf\endcsname{\def\PYG@tc##1{\textcolor[rgb]{0.00,0.00,1.00}{##1}}}
\def\csname PYG@tok@nc\endcsname{\let\PYG@bf=\textbf\def\PYG@tc##1{\textcolor[rgb]{0.00,0.00,1.00}{##1}}}
\def\csname PYG@tok@nn\endcsname{\let\PYG@bf=\textbf\def\PYG@tc##1{\textcolor[rgb]{0.00,0.00,1.00}{##1}}}
\def\csname PYG@tok@ne\endcsname{\let\PYG@bf=\textbf\def\PYG@tc##1{\textcolor[rgb]{0.82,0.25,0.23}{##1}}}
\def\csname PYG@tok@nv\endcsname{\def\PYG@tc##1{\textcolor[rgb]{0.10,0.09,0.49}{##1}}}
\def\csname PYG@tok@no\endcsname{\def\PYG@tc##1{\textcolor[rgb]{0.53,0.00,0.00}{##1}}}
\def\csname PYG@tok@nl\endcsname{\def\PYG@tc##1{\textcolor[rgb]{0.63,0.63,0.00}{##1}}}
\def\csname PYG@tok@ni\endcsname{\let\PYG@bf=\textbf\def\PYG@tc##1{\textcolor[rgb]{0.60,0.60,0.60}{##1}}}
\def\csname PYG@tok@na\endcsname{\def\PYG@tc##1{\textcolor[rgb]{0.49,0.56,0.16}{##1}}}
\def\csname PYG@tok@nt\endcsname{\let\PYG@bf=\textbf\def\PYG@tc##1{\textcolor[rgb]{0.00,0.50,0.00}{##1}}}
\def\csname PYG@tok@nd\endcsname{\def\PYG@tc##1{\textcolor[rgb]{0.67,0.13,1.00}{##1}}}
\def\csname PYG@tok@s\endcsname{\def\PYG@tc##1{\textcolor[rgb]{0.73,0.13,0.13}{##1}}}
\def\csname PYG@tok@sd\endcsname{\let\PYG@it=\textit\def\PYG@tc##1{\textcolor[rgb]{0.73,0.13,0.13}{##1}}}
\def\csname PYG@tok@si\endcsname{\let\PYG@bf=\textbf\def\PYG@tc##1{\textcolor[rgb]{0.73,0.40,0.53}{##1}}}
\def\csname PYG@tok@se\endcsname{\let\PYG@bf=\textbf\def\PYG@tc##1{\textcolor[rgb]{0.73,0.40,0.13}{##1}}}
\def\csname PYG@tok@sr\endcsname{\def\PYG@tc##1{\textcolor[rgb]{0.73,0.40,0.53}{##1}}}
\def\csname PYG@tok@ss\endcsname{\def\PYG@tc##1{\textcolor[rgb]{0.10,0.09,0.49}{##1}}}
\def\csname PYG@tok@sx\endcsname{\def\PYG@tc##1{\textcolor[rgb]{0.00,0.50,0.00}{##1}}}
\def\csname PYG@tok@m\endcsname{\def\PYG@tc##1{\textcolor[rgb]{0.40,0.40,0.40}{##1}}}
\def\csname PYG@tok@gh\endcsname{\let\PYG@bf=\textbf\def\PYG@tc##1{\textcolor[rgb]{0.00,0.00,0.50}{##1}}}
\def\csname PYG@tok@gu\endcsname{\let\PYG@bf=\textbf\def\PYG@tc##1{\textcolor[rgb]{0.50,0.00,0.50}{##1}}}
\def\csname PYG@tok@gd\endcsname{\def\PYG@tc##1{\textcolor[rgb]{0.63,0.00,0.00}{##1}}}
\def\csname PYG@tok@gi\endcsname{\def\PYG@tc##1{\textcolor[rgb]{0.00,0.63,0.00}{##1}}}
\def\csname PYG@tok@gr\endcsname{\def\PYG@tc##1{\textcolor[rgb]{1.00,0.00,0.00}{##1}}}
\def\csname PYG@tok@ge\endcsname{\let\PYG@it=\textit}
\def\csname PYG@tok@gs\endcsname{\let\PYG@bf=\textbf}
\def\csname PYG@tok@gp\endcsname{\let\PYG@bf=\textbf\def\PYG@tc##1{\textcolor[rgb]{0.00,0.00,0.50}{##1}}}
\def\csname PYG@tok@go\endcsname{\def\PYG@tc##1{\textcolor[rgb]{0.53,0.53,0.53}{##1}}}
\def\csname PYG@tok@gt\endcsname{\def\PYG@tc##1{\textcolor[rgb]{0.00,0.27,0.87}{##1}}}
\def\csname PYG@tok@err\endcsname{\def\PYG@bc##1{\setlength{\fboxsep}{0pt}\fcolorbox[rgb]{1.00,0.00,0.00}{1,1,1}{\strut ##1}}}
\def\csname PYG@tok@kc\endcsname{\let\PYG@bf=\textbf\def\PYG@tc##1{\textcolor[rgb]{0.00,0.50,0.00}{##1}}}
\def\csname PYG@tok@kd\endcsname{\let\PYG@bf=\textbf\def\PYG@tc##1{\textcolor[rgb]{0.00,0.50,0.00}{##1}}}
\def\csname PYG@tok@kn\endcsname{\let\PYG@bf=\textbf\def\PYG@tc##1{\textcolor[rgb]{0.00,0.50,0.00}{##1}}}
\def\csname PYG@tok@kr\endcsname{\let\PYG@bf=\textbf\def\PYG@tc##1{\textcolor[rgb]{0.00,0.50,0.00}{##1}}}
\def\csname PYG@tok@bp\endcsname{\def\PYG@tc##1{\textcolor[rgb]{0.00,0.50,0.00}{##1}}}
\def\csname PYG@tok@fm\endcsname{\def\PYG@tc##1{\textcolor[rgb]{0.00,0.00,1.00}{##1}}}
\def\csname PYG@tok@vc\endcsname{\def\PYG@tc##1{\textcolor[rgb]{0.10,0.09,0.49}{##1}}}
\def\csname PYG@tok@vg\endcsname{\def\PYG@tc##1{\textcolor[rgb]{0.10,0.09,0.49}{##1}}}
\def\csname PYG@tok@vi\endcsname{\def\PYG@tc##1{\textcolor[rgb]{0.10,0.09,0.49}{##1}}}
\def\csname PYG@tok@vm\endcsname{\def\PYG@tc##1{\textcolor[rgb]{0.10,0.09,0.49}{##1}}}
\def\csname PYG@tok@sa\endcsname{\def\PYG@tc##1{\textcolor[rgb]{0.73,0.13,0.13}{##1}}}
\def\csname PYG@tok@sb\endcsname{\def\PYG@tc##1{\textcolor[rgb]{0.73,0.13,0.13}{##1}}}
\def\csname PYG@tok@sc\endcsname{\def\PYG@tc##1{\textcolor[rgb]{0.73,0.13,0.13}{##1}}}
\def\csname PYG@tok@dl\endcsname{\def\PYG@tc##1{\textcolor[rgb]{0.73,0.13,0.13}{##1}}}
\def\csname PYG@tok@s2\endcsname{\def\PYG@tc##1{\textcolor[rgb]{0.73,0.13,0.13}{##1}}}
\def\csname PYG@tok@sh\endcsname{\def\PYG@tc##1{\textcolor[rgb]{0.73,0.13,0.13}{##1}}}
\def\csname PYG@tok@s1\endcsname{\def\PYG@tc##1{\textcolor[rgb]{0.73,0.13,0.13}{##1}}}
\def\csname PYG@tok@mb\endcsname{\def\PYG@tc##1{\textcolor[rgb]{0.40,0.40,0.40}{##1}}}
\def\csname PYG@tok@mf\endcsname{\def\PYG@tc##1{\textcolor[rgb]{0.40,0.40,0.40}{##1}}}
\def\csname PYG@tok@mh\endcsname{\def\PYG@tc##1{\textcolor[rgb]{0.40,0.40,0.40}{##1}}}
\def\csname PYG@tok@mi\endcsname{\def\PYG@tc##1{\textcolor[rgb]{0.40,0.40,0.40}{##1}}}
\def\csname PYG@tok@il\endcsname{\def\PYG@tc##1{\textcolor[rgb]{0.40,0.40,0.40}{##1}}}
\def\csname PYG@tok@mo\endcsname{\def\PYG@tc##1{\textcolor[rgb]{0.40,0.40,0.40}{##1}}}
\def\csname PYG@tok@ch\endcsname{\let\PYG@it=\textit\def\PYG@tc##1{\textcolor[rgb]{0.25,0.50,0.50}{##1}}}
\def\csname PYG@tok@cm\endcsname{\let\PYG@it=\textit\def\PYG@tc##1{\textcolor[rgb]{0.25,0.50,0.50}{##1}}}
\def\csname PYG@tok@cpf\endcsname{\let\PYG@it=\textit\def\PYG@tc##1{\textcolor[rgb]{0.25,0.50,0.50}{##1}}}
\def\csname PYG@tok@c1\endcsname{\let\PYG@it=\textit\def\PYG@tc##1{\textcolor[rgb]{0.25,0.50,0.50}{##1}}}
\def\csname PYG@tok@cs\endcsname{\let\PYG@it=\textit\def\PYG@tc##1{\textcolor[rgb]{0.25,0.50,0.50}{##1}}}
\newtheorem{thm}{Theorem}[section]
\newtheorem{lemm}[thm]{Lemma}
\newtheorem{prop}[thm]{Proposition}
\newtheorem{cor}[thm]{Corollary}
\newtheorem{assum}[thm]{Assumption}
\theoremstyle{definition}
\newenvironment{example}
  {\pushQED{\qed}\examplex}
  {\popQED\endexamplex}
\newtheorem{defn}[thm]{Definition}
\theoremstyle{remark}
\newtheorem{remark}{Remark}
\newcommand{\numberset}{\mathbb}
\newcommand{\C}{\mathbb{C}}
\newcommand{\N}{\numberset{N}}
\newcommand{\proj}[1]{\mathbb{P}^{#1}}
\definecolor{cof}{RGB}{219,144,71}
\definecolor{pur}{RGB}{186,146,162}
\definecolor{greeo}{RGB}{91,173,69}
\definecolor{greet}{RGB}{52,111,72}
\def\initial{{\mathrm{in}}}
\def\inv{^{-1}}
\begin{document}

\maketitle

\begin{abstract}
    We develop a new eigenvalue method for solving structured polynomial equations over any field. The equations are defined on a projective algebraic variety which admits a rational parameterization by a Khovanskii basis, e.g., a Grassmannian in its Pl\"ucker embedding. This generalizes established algorithms for toric varieties, and introduces the effective use of Khovanskii bases in computer algebra. We investigate regularity questions and discuss several applications. 
\end{abstract}

\section{Introduction} \label{sec:1}
Let $K$ be a field with algebraic closure $\overline{K}$. We consider the problem of finding all 
\begin{equation} \label{eq:system}
    t \,= \, (t_1, \ldots, t_n) \in \overline{K}^{\,n} \quad \text{such that} \quad f_1(t) \, = \, \cdots \, = \, f_s(t) \, = \, 0, 
\end{equation} 
where $f_1, \ldots, f_s \in K[t_1, \ldots, t_n]$ are polynomials with the following structure. Let $\phi_0, \ldots, \phi_\ell \in K[t_1,\ldots, t_n]$ be a different list of polynomials and let $d_1, \ldots, d_s \in \mathbb{N} \setminus \{0\}$ be positive integers. Our polynomials $f_i$ in \eqref{eq:system} are homogeneous polynomials in the $\phi_j$: 
\begin{equation} \label{eq:fi}
    f_i(t) \, = \, \sum_{|\alpha| = d_i} \,  c_{i,\alpha} \, \phi_0(t)^{\alpha_0} \phi_1(t)^{\alpha_1} \cdots \phi_\ell(t)^{\alpha_\ell}, \quad i = 1, \ldots, s. 
\end{equation} 
Here $\alpha = (\alpha_0, \alpha_1, \ldots, \alpha_\ell) \in \mathbb{N}^{\ell + 1}$ is a tuple of exponents and the sum is over all such tuples with entry sum $|\alpha| = \alpha_0 + \cdots + \alpha_\ell = d_i$. We assume that $c_{i,\alpha} \in K$ are generic in $\overline{K}$. We use the special structure in \eqref{eq:fi} to reformulate \eqref{eq:system} as follows: find all  
\begin{equation} \label{eq:homsystem}
    x \in X \subset \mathbb{P}_{\overline{K}}^\ell, \quad \text{such that} \quad F_1(x) \, = \, \cdots \, = \, F_s(x) \, = \, 0,  \quad \text{with } F_i \, = \, \sum_{|\alpha| = d_i} c_{i,\alpha} \, x^\alpha. 
\end{equation} 
Here $x^\alpha$ is short for $x_0^{\alpha_0}x_1^{\alpha_1} \cdots x_\ell^{\alpha_\ell}$, and $X$ is the unirational variety obtained from
\[ \phi: \, \overline{K}^{\, n} \dashrightarrow \mathbb{P}^\ell_{\overline{K}}, \quad t \, \mapsto \, (\phi_0(t) : \cdots : \phi_\ell(t) ) \]
by taking the closure of its image. We restrict ourselves to the case where the functions $\phi_j$ form a \emph{Khovanskii basis} for the algebra they generate. We elaborate on that assumption below. Any solution $t$ of \eqref{eq:system} gives a solution $x = \phi(t)$ of \eqref{eq:homsystem}. Conversely, if $\phi$ is injective and well-defined on $U \subset \overline{K}^{\,n}$, then every solution $x \in {\rm im}\, \phi \subset X$ of \eqref{eq:homsystem} gives a solution $t = \phi^{-1}(x) \in U$ of \eqref{eq:system}. In this paper, we solve \eqref{eq:homsystem}. 

Let us focus on the case where $n = s$. When $\ell = n = s$ and $\phi_0 = 1, \phi_1 = t_1, \ldots, \phi_n = t_n$, we are dealing with a general system of $n$ polynomial equations of degree $d_1, \ldots, d_n$. Equation \eqref{eq:system} regards them as equations on $\overline{K}^{\, n}$, while \eqref{eq:homsystem} interprets them as equations on $X = \mathbb{P}_{\overline K}^n$. B\'ezout's theorem predicts $d_1 \cdots d_n$ solutions. When $d_1 = \cdots = d_n = 1$ and $\phi_j = t^{\beta_j}$ are monomials, $X$ is a projective toric variety. In this case, Kushnirenko's theorem says that the expected number of solutions is the normalized volume of the convex hull of the exponents $\beta_j$, viewed as points in $\mathbb{R}^n$ \cite[Section 3.4]{telen2022introduction}. It is customary to refer to these standard cases as the \emph{dense} case and the \emph{(unmixed) sparse/toric} case, respectively. Obviously, for any choice of the parameterizing functions $\phi_j$, one can simply expand in monomials and re-interpret the equations \eqref{eq:fi} as being dense (or toric). However, the number of solutions might be far less than predicted by B\'ezout (or Kushnirenko), because the coefficients after expansion are no longer \emph{generic}. Methods for solving dense/toric equations will waste computational effort on trying to compute these `missing' solutions or, in the worst case, they will not work at all. It is therefore crucial to \emph{not} expand in monomials. We keep the structure \eqref{eq:fi}, and seek to exploit it. 
\begin{example}[$n = s = 2, \ell = 4$] \label{ex:duffing}
The \emph{Duffing equation} in physics models damped and driven oscillators. It gives rise to a system $f_1 = f_2 = 0$ in two unknowns, with
\[f_1 \, = \, c_{1,0} + c_{1,1} \, t_1 + c_{1,2} \, t_2 + c_{1,3} \, t_1(t_1^2+t_2^2), \quad f_2 \, = \, c_{2,0} + c_{2,1} \, t_1 + c_{2,2} \, t_2 + c_{2,4} \, t_2(t_1^2+t_2^2). \]
This appears in \cite[Section 3]{breiding2022algebraic}.
B\'ezout's theorem predicts nine solutions of $f_1 = f_2 = 0$ in $\mathbb{P}^2_{\overline{K}}$. However, by \cite[Theorem 3.1]{breiding2022algebraic}, only five of those lie in $\overline{K}^{\, 2}$. Kushnirenko's theorem counts solutions on the projective toric surface parameterized by the seven monomials appearing in $f_1, f_2$. This surface lives in $\mathbb{P}^6_{\overline{K}}$ and has degree nine, which again overcounts the solutions in ${\overline{K}}^{\,2}$ by four. In this paper, we solve \eqref{eq:homsystem}, i.e.~we compute all
\begin{equation} \label{eq:homsysduffing}
x \in X \subset \mathbb{P}^4_{\overline{K}} , \quad \text{such that} \quad F_1(x) \, =  \, F_2(x) \, = \, 0,  \quad \text{with } F_i \, = \, \textstyle \sum_{j = 0}^4 c_{i,j} \, x_j. 
\end{equation}
Here $c_{1,4} = c_{2,3} = 0$. The variety $X$ is the surface in $\mathbb{P}^4_{\overline{K}}$ parameterized by $\phi_0 = 1, \phi_1 = t_1, \phi_2 = t_2, \phi_3 = t_1(t_1^2 + t_2^2), \phi_4 = t_2(t_1^2 + t_2^2)$. Its degree is 5, which agrees with \cite[Theorem 3.1]{breiding2022algebraic}. The three defining equations are shown in Example \ref{ex:duffing2}.
\end{example}
The assumption that $\phi_0, \ldots, \phi_\ell$ form a Khovanskii basis generalizes the toric case: monomials always form a Khovanskii basis. However, Khovanskii bases are much more flexible. Examples in which this Khovanskii assumption is satisfied include Schubert problems \cite{huber1998numerical}, equations expressed in terms of elementary symmetric polynomials \cite[p.~99]{sturmfelsgroebner} and the Duffing oscillators from Example \ref{ex:duffing} \cite{breiding2022algebraic}. The results in the latter paper were recently extended to more general oscillator problems in \cite{Viktoriia}, which also uses Khovanskii bases. We will recall the basics of Khovanskii bases in Section \ref{sec:2}. 

\paragraph{Related work.} 
The toric case, i.e.~the case in which all $\phi_j$ are monomials, is classical. Prime examples are the theory of sparse resultants \cite[Chapter 7]{cox2006using} and the polyhedral homotopy method from \cite{huber1995polyhedral} (which needs $K = \mathbb{C}$). On the homotopy continuation side, the authors of \cite{huber1998numerical} considered the special case where the $\phi_j$ parameterize the Grassmannian in its Pl\"ucker embedding. Recently, Burr, Sottile and Walker developed more general continuation methods based on Khovanskii bases \cite{burr2020numerical}. Their algorithm generalizes the SAGBI homotopy from \cite{huber1998numerical}. Specialized homotopies for the case of oscillators are found in \cite{breiding2022algebraic}. On the algebraic side, the paper \cite{buse2000generalized} goes beyond toric varieties: it develops the theory of resultants for $n + 1$ equations on an $n$-dimensional unirational variety. This leads to approaches for solving square instances of \eqref{eq:homsystem} (i.e.~$n = s$) via B\'ezoutian matrices. Resultants on more general varieties are studied in \cite{monin2020overdetermined}. To the best of our knowledge, Khovanskii bases have not been used before in this context. 

\paragraph{Contributions.} As discussed above, Khovanskii bases have been successfully used in \emph{homotopy solvers}, which work inherently over $K = \mathbb{C}$ and are most natural to use when $n = s$. Our contribution is to optimally exploit structure of the form \eqref{eq:fi} in \emph{computer algebra methods}, which work over any field and for any number of equations. Existing normal form methods, e.g.~Gr\"obner bases, necessarily work with monomials. State-of-the-art implementations use linear algebra operations on a \emph{Macaulay matrix}, whose rows (or columns) are indexed by monomials. We introduce a generalization of such matrices, called \emph{Khovanskii-Macaulay matrices}, in which monomials are replaced by basis elements of the graded pieces $K[X]_d$ of the coordinate ring $K[X]$ of $X$. From a Khovanskii-Macaulay matrix, one can compute multiplication operators modulo the ideal $I$ generated by the $F_i$ from \eqref{eq:homsystem}. Their eigenvalues reveal the solutions (Theorem \ref{thm:eigenvaluethm2}). 
The size of the Khovanskii-Macaulay matrix depends on the \emph{regularity} of the ideal $I$. Important in this regard is the following theorem, which is of independent interest. 
\begin{thm} \label{thm:intro}
    Let $X \subset \mathbb{P}^\ell_{\overline{K}}$ 
    be arithmetically Cohen-Macaulay of dimension $n$, with Hilbert series ${\rm HS}_X(u) = (c_a u^a + c_{a+1}u^{a+1} + \cdots + c_b u^b)/(1-u)^{n+1}$, $c_b \neq 0$. Let $I = \langle F_1, \ldots, F_n \rangle \subset K[X]$ be a homogeneous ideal with ${\rm deg}(F_i) = d_i$, such that $\dim V_X(I) = 0$. Then for all degrees $d \geq \sum_{i=1}^n d_i + b - n$, we have $\dim_K (K[X]/I)_d = {\rm deg} V_X(I)$.
\end{thm}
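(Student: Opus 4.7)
The plan is to reduce the statement to a Koszul-type Hilbert series computation, with the crux being that $F_1, \ldots, F_n$ forms a regular sequence in $K[X]$. Since $X$ is arithmetically Cohen-Macaulay of projective dimension $n$, the graded algebra $K[X]$ is Cohen-Macaulay of Krull dimension $n+1$. The hypothesis $\dim V_X(I) = 0$ says the affine cone $\mathrm{Spec}(K[X]/I)$ has Krull dimension $1$, so $I$ has height $n$ in $K[X]$. A standard fact in a graded Cohen-Macaulay ring is that a homogeneous sequence whose length equals the height of the ideal it generates is a regular sequence; this yields the desired regularity of $F_1, \ldots, F_n$.

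Once this is in place, the Koszul complex on $F_1, \ldots, F_n$ is a graded free resolution of $K[X]/I$ over $K[X]$, so the Hilbert series multiply:
\[ {\rm HS}_{K[X]/I}(u) \, = \, {\rm HS}_X(u) \cdot \prod_{i=1}^n (1 - u^{d_i}). \]
Using the factorization $(1 - u^{d_i}) = (1-u)(1 + u + \cdots + u^{d_i - 1})$ together with the assumed form of ${\rm HS}_X(u)$, this collapses to
\[ {\rm HS}_{K[X]/I}(u) \, = \, \frac{Q(u)}{1-u}, \qquad Q(u) \, = \, \Bigl(\sum_{j=a}^b c_j \, u^j \Bigr) \, \prod_{i=1}^n (1 + u + \cdots + u^{d_i - 1}), \]
a polynomial of degree $b + \sum_{i=1}^n (d_i - 1) = \sum_{i=1}^n d_i + b - n$.

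To finish, I expand $Q(u)/(1-u)$ coefficient-wise: writing $Q(u) = \sum_{j \geq 0} q_j u^j$, the coefficient of $u^d$ in $Q(u)/(1-u)$ is the partial sum $\sum_{j \leq d} q_j$, which stabilizes at $Q(1)$ as soon as $d \geq \deg Q$. This is exactly the bound in the statement. It remains to identify the stable value $Q(1)$ with $\deg V_X(I)$. Because $K[X]/I$ is Cohen-Macaulay of Krull dimension $1$, the irrelevant ideal is not associated to $K[X]/I$, so $I$ is saturated in $K[X]$. Hence $\dim_K (K[X]/I)_d$ for $d \gg 0$ computes the degree of the $0$-dimensional projective scheme $V_X(I) \subset \mathbb{P}^\ell_{\overline{K}}$, and this stable value is $Q(1)$.

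The main obstacle in this plan is the very first step: the implication $\dim V_X(I) = 0 \Rightarrow F_1, \ldots, F_n$ regular relies essentially on the Cohen-Macaulay hypothesis, via the equivalence between ``height $=$ length'' and ``regular sequence'' in CM rings. Without arithmetic Cohen-Macaulayness of $X$, one loses both the exactness of the Koszul complex and the multiplicativity of Hilbert series, and the degree bound can fail. Everything after this step is bookkeeping with generating series.
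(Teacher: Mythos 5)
Your proposal is correct, and its backbone --- $\dim V_X(I)=0$ plus Cohen--Macaulayness forces $F_1,\ldots,F_n$ to be a regular sequence, after which the Koszul complex controls the Hilbert function of $K[X]/I$ --- is exactly the paper's. The difference is in how the degree bound is extracted. The paper restricts the Koszul resolution to a single degree $d_{[n]}+w$, obtains the alternating-sum formula $\dim_K(K[X]/I)_{d_{[n]}+w}=\sum_p(-1)^{n-p}\sum_{J}\dim_K K[X]_{d_J+w}$, and then argues that for $w\geq {\rm HReg}(K[X])$ each term is given by the Hilbert polynomial, so the sum equals the eventually-constant value $\delta$; the threshold $b-n$ then comes from the Bruns--Herzog identification ${\rm HReg}(K[X])=b-n$ (the paper's Theorem on Hilbert regularity). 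You instead package the same Koszul information as the generating-function identity ${\rm HS}_{K[X]/I}(u)={\rm HS}_X(u)\prod_i(1-u^{d_i})=Q(u)/(1-u)$ and read the stabilization point directly off $\deg Q=\sum_i d_i+b-n$ (the leading coefficient $c_b$ of $P_X$ survives the product, so there is no cancellation). This is a bit more self-contained, since it bypasses the notion of Hilbert regularity and the citation to Bruns--Herzog, at the cost of not producing the intermediate statement (the paper's Theorem \ref{thm:regfromhreg}) that the authors reuse elsewhere. Two small remarks: your saturation argument (that $K[X]/I$ is CM of dimension $1$, hence has no embedded primes and in particular the irrelevant ideal is not associated) is valid and is in fact a cleaner route to $I=I^{\rm sat}$ than the paper's unmixedness argument; and for the statement as posed --- identifying the stable value of $\dim_K(K[X]/I)_d$ with $\deg V_X(I)$ --- saturation is not strictly needed, since the Hilbert functions of $I$ and $I^{\rm sat}$ agree in large degrees anyway, though it is needed for the paper's stronger claim that these degrees lie in ${\rm Reg}(I)$.
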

\noindent In this theorem, $V_X(I)$ is the subscheme of $X$ defined by $I$. Note that no assumptions related to Khovanskii bases are necessary: $X$ does not even need to be unirational. 

Our eigenvalue method for solving \eqref{eq:homsystem} is implemented in a Julia package, called \texttt{KhovanskiiSolving.jl}: \url{https://mathrepo.mis.mpg.de/KhovanskiiSolving}. 

\paragraph{Outline.} Section \ref{sec:2} discusses unirational varieties obtained from Khovanskii bases.  Section \ref{sec:3} introduces Khovanskii-Macaulay matrices, and Section \ref{sec:4} states an eigenvalue theorem (Theorem \ref{thm:eigenvaluethm2}) used for solving \eqref{eq:homsystem}. Section \ref{sec:5} is about regularity when $n = s$. It contains a proof of Theorem \ref{thm:intro}. In Section \ref{sec:6}, we demonstrate our method via our Julia code. Finally, Section \ref{sec:7} solves non-trivial problems on Grassmannians.  

\section{Unirational varieties and Khovanskii bases} \label{sec:2}

A unirational variety $X \subset \mathbb{P}^\ell_{\overline{K}}$ is obtained as the closure of the image of a rational map 
\[ \phi: \, {\overline{K}}^{\,n} \dashrightarrow \mathbb{P}^\ell_{\overline{K}}, \quad t \, \mapsto \, (\phi_0(t) : \cdots : \phi_\ell(t)). \]
We assume that $\phi_j \in K[t_1, \ldots, t_n]$ have coefficients in $K \subseteq \overline{K}$. The coordinate ring of the $K$-variety $X\cap \mathbb{P}_K^\ell$ is $R = K[t_0\phi_0,\dots,t_0\phi_\ell]\subset K[t_0,\dots, t_n]$. Indeed, the sequence
\[ 0 \longrightarrow I(X) \longrightarrow K[x_0, \ldots, x_\ell] \longrightarrow R \longrightarrow 0.\]
is exact. Here $I(X) \subset K[x_0, \ldots, x_\ell]$ is the vanishing ideal of $X\cap \mathbb{P}_K^\ell$, and the second map sends $x_i$ to $t_0 \phi_i$. We will write $K[X] = K[x_0,\ldots, x_\ell]/I(X) \simeq R$. The grading
\begin{equation} \label{eq:gradingR} R \, = \, \bigoplus_{d = 0}^\infty R_d, \quad \text{with} \quad R_d \, = \, \{ t_0^d\cdot p(\phi_0,\dots,\phi_\ell) \mid p\in K[x_0,\dots,x_\ell]_d  \} 
\end{equation}
makes this an isomorphism of graded rings. That is, as $K$-vector spaces, $K[X]_d \simeq R_d$.

In examples, our field is $K = \mathbb{Q}$ or $K = \mathbb{F}_p$. As mentioned in the Introduction, we assume in this article that $\phi_0, \ldots, \phi_\ell$ form a \emph{Khovanskii basis}. This section makes this precise, and it recalls the basic facts about Khovanskii bases exploited in our methods.

The subalgebra $R \subset K[t_0, \ldots, t_n]$ is generated by $\ell + 1$ elements $t_0 \phi_0, \ldots, t_0\phi_\ell$. Such a finite set of generators of an algebra $R$ is called a \emph{Khovanskii basis} if it satisfies some nice properties, as detailed in Definition \ref{K.B. def}.
In a sense, Khovanskii bases are to subalgebras of a polynomial ring what Gröbner bases are to ideals. We should point out that the term Khovanskii basis is used in much more general settings, see \cite{kaveh2012newton,kaveh2019khovanskii}. The instances used in this paper are called \emph{SAGBI bases} (Subalgebra Analog to Gröbner Bases for Ideals) by Robbiano and Sweedler \cite{robbiano2006subalgebra} and \emph{canonical bases} by Sturmfels \cite{sturmfelsgroebner}.

Let $\prec$ be a monomial order on $K[t_0, \ldots, t_n]$. For a subalgebra $R \subset K[t_0, \ldots, t_n]$, the \emph{initial algebra} ${\rm in}_\prec(R)$ is defined as 
\[ {\rm in}_\prec(R) \, = \, K[ \, {\rm in}_\prec(\psi) \, : \, \psi \in R \, ]. \]
A subset ${\cal F} \subset K[t_0, \ldots, t_n]$ generates the subalgebra $R = K[{\cal F}] \subset K[t_0, \ldots, t_n]$. The set of $\prec$-leading terms of ${\cal F}$ is denoted ${\rm in}_\prec({\cal F})$. The inclusion $K[{\rm in}_\prec({\cal F})] \subseteq {\rm in}_\prec(K[{\cal F}])$ is obvious. The opposite inclusion holds precisely when ${\cal F}$ is a \emph{Khovanskii basis} for~$K[{\cal F}]$: 
\begin{defn} \label{K.B. def}
Let $R\subseteq K[t_0,\dots,t_n]$ be a subalgebra of the polynomial ring and $\mathcal{F}\subseteq R$ a set of polynomials. Let $\prec$ be a monomial order on $K[t_0,\dots,t_n]$. 
We say that $\mathcal{F}$ is a \emph{Khovanskii basis} for $R$ with respect to $\prec$ if $\initial_{\prec}{(R)}=K[\initial_{\prec}{(f)} : f\in\mathcal{F}]$.
\end{defn}

For $\omega \in \mathbb{R}^{n+1}$, we define the \emph{$\omega$-initial form} ${\rm in}_\omega(\psi)$ of $\psi$ to be the sum of all terms $c_\alpha t^\alpha$ of $\psi$ for which $w \cdot \alpha$ is minimal. For a finite subset ${\cal F} = \{ \psi_0, \ldots, \psi_\ell \} \subset K[t_0, \ldots, t_n]$, let $\omega \in \mathbb{R}^{n+1}$ be such that the leading terms of ${\cal F}$ with respect to $\prec$ agree with the $\omega$-initial forms: ${\rm in}_\omega(\psi_i) = {\rm in}_{\prec}(\psi_i)$. Such a weight vector $\omega$ is said to \emph{represent} $\prec$ for ${\cal F}$.

\begin{example} \label{KB example}
Let $\prec$ be the degree lexicographic term order on $K[t_0,t_1,t_2]$. On 
\[ \mathcal{F} \, = \, \{ t_0(t_1-t_2), \, t_0(t_2^2-t_2), \, t_0(t_1t_2-t_2), \, t_0(t_1^2-t_2), \, t_0(t_1t_2^2-t_2), \, t_0(t_1^2t_2-t_2) \}, \]
this is represented by the weight vector $\omega = (0,-2,-1)$.
The polynomials in ${\cal F}$ form a Khovanskii basis with respect to ${\prec}$. 
The initial algebra of $R = K[\mathcal{F}]$ is the monomial algebra ${\rm in}_\prec(R) = K[t_0t_1,\, t_0t_2^2,\, t_0t_1t_2,\, t_0t_1^2,\, t_0t_1t_2^2,\, t_0t_1^2t_2]$. In our setup, we write ${\cal F} = \{\psi_0, \ldots, \psi_5\}$ with $\psi_i = t_0 \phi_i$, and the $\phi_i$ parameterize a surface $X$ in $\mathbb{P}^5_{\overline{K}}$. This surface is a degree 5 del Pezzo surface, obtained by blowing up $\mathbb{P}^2_{\overline{K}}$ in the points $(1:0:0),\, (0:1:0),\, (0:0:1)$ and $(1:1:1)$.
Its ideal $I(X)$ is generated by five quadratic polynomials and one cubic polynomial.
\end{example}

Definition \ref{K.B. def} does not require Khovanskii bases to be finite. In contrast with Gr\"obner bases, not every finitely generated subalgebra has a finite Khovanskii basis. A classical example is the invariant ring of the alternating group $A_n$ \cite[Example 11.2]{sturmfelsgroebner}. However, in many practical cases a finite Khovanskii basis exists. 

There is an algorithm to check whether a set $\mathcal{F}=\{\psi_0,\ldots, \psi_\ell \}$ is a Khovanskii basis for $R=K[\mathcal{F}]$ with respect to $\prec$. As above, $\omega \in \mathbb{R}^{n+1}$ represents $\prec$ on ${\cal F}$. Let $A$ be the $(n+1) \times (\ell+1)$ matrix whose columns are the vectors $\alpha_i\in \N^{n+1}$ occuring as leading exponents of the $\psi_i$, i.e., $\initial_\omega{(\psi_i)}=t^{\alpha_i}$. Consider the following two ring maps:
\[ \begin{aligned}
 K[x_0,\dots, x_\ell]&\longrightarrow K[t_1,\dots,t_n], \ &K[x_0,\dots,x_\ell]&\longrightarrow {\rm in}_\prec(R) \\
x_i&\longmapsto \psi_i & x_i&\longmapsto \initial_\omega{(\psi_i)}.
\end{aligned}\]
Their kernels are denoted by $I$ and $I_A$ respectively . The second ideal is the toric ideal associated to the matrix $A$.
The following is {\cite[Theorem~11.4]{sturmfelsgroebner}}.
\begin{thm} \label{thm:khovcheck}
Let $\prec$ be a monomial order on $K[t_0,\dots,t_n]$, represented by $\omega \in \mathbb{R}^{n+1}$ on ${\cal F}$. The set $\mathcal{F}$ is a Khovanskii basis of $R = K[{\cal F}]$ w.r.t.~$\prec$ if and only if $\initial_{A^T\omega}{(I)}=I_A$.
\end{thm}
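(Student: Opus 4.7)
The proof hinges on tying the two ring maps together through a single weight-tracking observation. Let $w = A^T\omega \in \mathbb{R}^{\ell+1}$, so $x_i$ carries $w$-weight $\omega\cdot\alpha_i$. For any $p = \sum_\beta c_\beta x^\beta \in K[x_0,\dots,x_\ell]$, expanding $\pi(p)$ using $\psi_i = t^{\alpha_i} + (\text{strictly larger }\omega\text{-weight terms})$ and grouping contributions gives
\[
\pi(p) \;=\; \mathrm{in}_w(p)\bigl(t^{\alpha_0},\dots,t^{\alpha_\ell}\bigr) \;+\; (\text{terms of }\omega\text{-weight }>W),
\]
where $W$ is the minimum $w$-weight among the monomials of $p$, and where the parenthesised leading evaluation is precisely $\mathrm{in}_w(p)$ applied at $x_i = t^{\alpha_i}$. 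Thus either this leading contribution is nonzero and equals $\mathrm{in}_\omega(\pi(p))$, or $\mathrm{in}_w(p)\in I_A$ and $\pi(p)$ has $\omega$-weight strictly above $W$. Both inclusions in the theorem drop out of this dichotomy.

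Direction $(\Rightarrow)$: assume $\mathcal F$ is a Khovanskii basis. For $g\in I$, $\pi(g)=0$ excludes the first case of the dichotomy, so $\mathrm{in}_w(g)\in I_A$, yielding $\mathrm{in}_w(I)\subseteq I_A$. Conversely, given a $w$-homogeneous $f\in I_A$ of weight $W$, the Khovanskii hypothesis $\mathrm{in}_\omega(R)=K[t^{\alpha_0},\dots,t^{\alpha_\ell}]$ lets us iteratively write each successive leading form $\mathrm{in}_\omega\bigl(\pi(f - h_1-\cdots-h_k)\bigr)$ as $h_{k+1}(t^{\alpha_0},\dots,t^{\alpha_\ell})$ for a $w$-homogeneous lift $h_{k+1}$ of strictly larger $w$-weight, and subtract it off; the $w$-initial form is preserved throughout and the $\omega$-weight of the image strictly increases, producing $g\in I$ with $\mathrm{in}_w(g)=f$. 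Direction $(\Leftarrow)$: assume $\mathrm{in}_w(I)=I_A$. Given $\psi\in R$, maximise $W(p) := \min\{w\cdot\beta : c_\beta\neq 0 \text{ in }p\}$ over preimages $p\in\pi^{-1}(\psi)$; the maximum is attained since $W(p)$ is bounded above by the $\omega$-weight of $\psi$ and (after clearing denominators in $\omega$) lies in a discrete lattice. At the maximiser, the second case of the dichotomy cannot occur: it would give $\mathrm{in}_w(p)\in I_A = \mathrm{in}_w(I)$, hence $\mathrm{in}_w(p)=\mathrm{in}_w(g)$ for some $g\in I$, and $p - g\in\pi^{-1}(\psi)$ would have strictly greater $W$-value. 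So the first case applies and $\mathrm{in}_\omega(\psi)\in K[t^{\alpha_0},\dots,t^{\alpha_\ell}]$, showing $\mathcal F$ is a Khovanskii basis.

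The delicate point is the termination of the iterative subduction in direction $(\Rightarrow)$: the correction weights $W_k$ tend to $+\infty$, so one cannot literally form the infinite sum $f - \sum_k h_k$ as a polynomial. A cleaner way around this is to recast the entire argument in terms of associated graded rings: one shows that the $w$-filtration on $K[x_0,\dots,x_\ell]$ pushed forward through $\pi$ coincides with the $\omega$-filtration on $R$ exactly when $\mathcal F$ is a Khovanskii basis, and in that case the canonical surjections $K[x_0,\dots,x_\ell]/I_A \twoheadrightarrow \mathrm{gr}_\omega R \twoheadleftarrow K[x_0,\dots,x_\ell]/\mathrm{in}_w(I)$ become isomorphisms, delivering $I_A = \mathrm{in}_w(I)$ and both directions in one stroke.
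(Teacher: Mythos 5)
First, a point of reference: the paper does not prove this statement at all --- it is quoted from Sturmfels' book as Theorem~11.4 --- so I am comparing your argument with the standard proof rather than with anything in the text. Your weight-tracking lemma, $\pi(p)=\mathrm{in}_w(p)(t^{\alpha_0},\dots,t^{\alpha_\ell})+(\text{terms of }\omega\text{-weight}>W)$ together with the resulting dichotomy, is exactly the engine of that proof, and you deploy it correctly. The inclusion $\mathrm{in}_w(I)\subseteq I_A$ (which, as you note, needs no hypothesis) and the whole direction $(\Leftarrow)$ via maximizing $W(p)$ over preimages are essentially complete; for the latter you implicitly use the standard fact that a nonzero $w$-homogeneous element of $\mathrm{in}_w(I)$ equals $\mathrm{in}_w(g)$ for some $g\in I$, and the discreteness claim requires $\omega$ to be rational, but both points are routine.

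The genuine gap is the one you flag yourself: termination of the subduction establishing $I_A\subseteq\mathrm{in}_w(I)$. As written that inclusion is not proved, and the associated-graded paragraph does not close it --- besides being a sketch, it misstates the key map: $K[x_0,\dots,x_\ell]/I_A\to\mathrm{gr}_\omega R$ is always \emph{injective} (it is the inclusion of the subalgebra generated by the $t^{\alpha_i}$ into the span of all $\omega$-initial forms of elements of $R$), and its surjectivity is precisely the Khovanskii property, so presenting it as a surjection assumes what is to be proved. In the setting where the paper uses the theorem the fix is elementary: the generators $\psi_i=t_0\phi_i$ all have $t_0$-degree one, so $I$ and $I_A$ are homogeneous for the standard grading of $K[x_0,\dots,x_\ell]$ and one may take the $w$-homogeneous $f\in I_A$ and every correction $h_k$ inside a single finite-dimensional graded piece $K[x_0,\dots,x_\ell]_d$. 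The strictly increasing weights $W_k$ then range over the finite set $\{\,w\cdot\beta : |\beta|=d\,\}$, so the subduction halts with $\pi(f-h_1-\cdots-h_k)=0$; since each $h_j$ is $w$-homogeneous of weight $W_j>W_0$, the element $g=f-h_1-\cdots-h_k\in I$ satisfies $\mathrm{in}_w(g)=f$, and $I_A\subseteq\mathrm{in}_w(I)$ follows because $I_A$ is spanned by its $w$-homogeneous elements. (For non-graded $\mathcal{F}$ one genuinely needs the degeneration argument carried out in full, as in the cited source.) A last caveat, which the paper inherits from its source and which I do not charge to you: your argument characterizes the Khovanskii property for the weight $\omega$, i.e.\ $\mathrm{in}_\omega(R)=K[t^{\alpha_0},\dots,t^{\alpha_\ell}]$, and passing between this and the condition for the monomial order $\prec$ requires $\omega$ to represent $\prec$ on more than just $\mathcal{F}$.
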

\begin{example} \label{ex:Khovcheck}
Let ${\cal F} = \{ \psi_0, \ldots, \psi_5 \} \subset K[t_0, t_1, t_2]$ be as in Example \ref{KB example}. The following snippet of \texttt{Macaulay2} code uses Theorem \ref{thm:khovcheck} to test whether ${\cal F}$ is a Khovanskii basis:
\begin{minted}{julia}
loadPackage "QuasiDegrees"
S = QQ[t0,t1,t2, MonomialOrder => {Weights => {0,2,1}}, Global => true];
F = {t0*(t1-t2), ..., t0*(t1^2*t2-t2)}; inF = apply(F,leadTerm);
exps = apply(toList(0..length(F)-1), i->(exponents leadMonomial inF_i)_0);
A = transpose matrix(exps); v = flatten entries transpose(matrix{{0,2,1}}*A);  
R = QQ[x_0..x_5, MonomialOrder => {Weights => v}, Global => true];
ideal(leadTerm(1,ker map(S,R,F))) == toricIdeal(A,R) --true
\end{minted}
\normalsize
Notice that $\mathtt{Weights} = -\omega$ to be consistent with \texttt{Macaulay2} conventions. This check can also be done using \texttt{isSagbi} from the package \texttt{SubalgebraBases} \cite{burr2023subalgebrabases}. 
\end{example}
We switch back to the setting where $\psi_i = t_0 \phi_i$ and $R = K[{\cal F}] = K[t_0 \phi_0, \ldots, t_0\phi_\ell]$. Our matrix construction in Section \ref{sec:3} relies on the knowledge of a $K$-basis for some graded pieces $R_d$ of $R$. The Khovanskii basis property helps to find such bases. 

The grading on $R$ is as in \eqref{eq:gradingR}. It is inherited by the monomial algebras $K[{\rm in}_\prec({\cal F})] \subseteq {\rm in}_\prec(R)$. We write ${\rm HF}_R : \mathbb{Z} \rightarrow \mathbb{N}$ for the Hilbert function of a $\mathbb{Z}$-graded $K$-algebra $R$: ${\rm HF}_R(d) = \dim_K R_d$. As above, $A = [\alpha_0~ \cdots ~\alpha_\ell]$ is the matrix of leading exponents, and 
\[ d \cdot A = \{ \alpha_{i_1} + \cdots + \alpha_{i_d} \, : \, 0 \leq i_j \leq \ell \} \]
consists of all $d$-element sums of the columns of $A$.

\begin{prop} \label{prop:basisforRd}
Suppose ${\cal F} = \{ t_0 \phi_0, \ldots, t_0 \phi_\ell \} \subset K[t_0, \ldots, t_n]$ is a Khovanskii basis for $R = K[{\cal F}]$ with respect to some term order $\prec$.
Then ${\rm HF}_{R}(d) = {\rm HF}_{K[{\rm in}_\prec({\cal F})]}(d)$ for all $d \in \mathbb{Z}$. Moreover, a $K$-basis of $R_d$ is given by
\begin{equation} \label{eq:bdbeta}
b_{\beta} \, = \, t_0^d \cdot \phi_{i_1}\cdots \phi_{i_d}, \quad \beta \in d \cdot A, 
\end{equation}
where $0 \leq i_1 \leq \cdots \leq i_d \leq \ell$ are integers, $\alpha_{i_j}$ is column $i_j$ of $A$ and $\alpha_{i_1} + \cdots + {\alpha_{i_d}} = \beta$.
\end{prop}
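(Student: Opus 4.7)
The plan is to reduce everything to a standard \emph{initial form preserves dimension} statement, and then observe that the given $b_\beta$ have pairwise distinct leading monomials.

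First I would record that the grading \eqref{eq:gradingR} makes $R$ into a graded subalgebra of $K[t_0,\dots,t_n]$ with $R_d = \mathrm{span}_K\{t_0^d\phi_{i_1}\cdots\phi_{i_d} : 0\le i_1,\dots,i_d\le \ell\}$, and it induces the analogous grading on the monomial subalgebra $K[\initial_\prec(\mathcal{F})]$. By the Khovanskii basis hypothesis, $\initial_\prec(R) = K[\initial_\prec(\mathcal{F})]$, and in particular the grading extends consistently to $\initial_\prec(R)$, so that $\bigl(\initial_\prec(R)\bigr)_d$ is spanned by $t^{\alpha_{i_1}+\cdots+\alpha_{i_d}}$ for $0\le i_j\le \ell$, i.e.\ by $\{t^\beta : \beta\in d\cdot A\}$. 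Since distinct monomials are $K$-linearly independent, $\mathrm{HF}_{K[\initial_\prec(\mathcal{F})]}(d) = |d\cdot A|$.

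Next I would invoke the standard initial-form fact: for any finite-dimensional $K$-subspace $V\subset K[t_0,\dots,t_n]$, one has $\dim_K V = \dim_K \initial_\prec(V)$, where $\initial_\prec(V) := \mathrm{span}_K\{\initial_\prec(v) : v\in V\}$. This follows from Gaussian elimination on a basis of $V$ sorted by $\prec$-leading monomial. Applying this to $V=R_d$ and noting that $\initial_\prec$ is multiplicative on nonzero elements, one checks directly that $\initial_\prec(R_d) = \bigl(\initial_\prec(R)\bigr)_d$: the inclusion $\subseteq$ is immediate since $\initial_\prec(\psi)$ for $\psi\in R_d$ lies in degree $d$ of $\initial_\prec(R)$, and $\supseteq$ holds because every generator $\prod_{j=1}^d \initial_\prec(t_0\phi_{i_j}) = \initial_\prec\bigl(\prod_{j=1}^d t_0\phi_{i_j}\bigr)$ is a leading term of an element of $R_d$. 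This gives the first claim, $\mathrm{HF}_R(d) = \mathrm{HF}_{K[\initial_\prec(\mathcal{F})]}(d) = |d\cdot A|$.

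For the explicit basis, I would argue as follows. For each $\beta\in d\cdot A$, choose any ordered tuple $0\le i_1\le\cdots\le i_d\le \ell$ with $\alpha_{i_1}+\cdots+\alpha_{i_d}=\beta$ and set $b_\beta = t_0^d\,\phi_{i_1}\cdots\phi_{i_d}\in R_d$. Because $\initial_\prec$ is multiplicative and $\initial_\omega(\psi_i)=t^{\alpha_i}$, we have $\initial_\prec(b_\beta) = t^\beta$. So the finite set $\{b_\beta : \beta\in d\cdot A\}\subset R_d$ has pairwise distinct leading monomials, hence is $K$-linearly independent (a standard argument: in any nontrivial linear relation, the term with $\prec$-largest leading monomial cannot be cancelled). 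Since its cardinality $|d\cdot A|$ equals $\dim_K R_d$ by the previous paragraph, it is a $K$-basis of $R_d$.

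The only mildly delicate point is the identification $\initial_\prec(R_d) = \bigl(\initial_\prec(R)\bigr)_d$; beyond this, the argument is essentially bookkeeping around the multiplicativity of $\initial_\prec$ and the definition of a Khovanskii basis.
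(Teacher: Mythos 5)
Your proof is correct. The paper itself does not prove this proposition but defers to \cite[Proposition 4.3]{breiding2022algebraic}, and your argument is the standard one that reference follows: the Khovanskii hypothesis gives $\initial_\prec(R_d)=\bigl(K[\initial_\prec(\mathcal F)]\bigr)_d$ (this is exactly where the hypothesis is needed -- without it the $b_\beta$ would only span a proper subspace of $R_d$), the initial-form-preserves-dimension fact gives the Hilbert function equality, and the distinct leading monomials $t^\beta$ of the $b_\beta$ combined with the cardinality count give the basis claim. The delicate point you flag is handled correctly, using that each generator $t_0\phi_i$ has $t_0$-degree one so the grading on the monomial algebra is by $t_0$-degree.
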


Notice that, in Proposition \ref{prop:basisforRd}, the number of basis elements equals the number of elements in $d \cdot A$. This is the number of monomials in $K[{\rm in}_\prec({\cal F})]$ of degree $d$. For a given $\beta$, the set of indices $0 \leq i_1 \leq \cdots \leq i_d \leq \ell$ such that $\alpha_{i_1} + \cdots + \alpha_{i_d} = \beta$ might not be unique. However, the statement does not depend on the choice. Proposition \ref{prop:basisforRd} is well-known. A proof can be found, for instance, in \cite[Proposition 4.3]{breiding2022algebraic}.

\begin{example}\label{k-basis es} Let $R$ and $\mathcal{F}$ be the algebra and its Khovanskii basis defined in Example \ref{KB example}. The matrix $A$ of leading exponents of $\mathcal{F}$ is
\[ A=\begin{pmatrix} 1 & 1 & 1 & 1 & 1 & 1 \\ 1 & 0 & 1 & 2 & 1 &  2\\ 0 & 2 & 1 & 0 & 2 & 1 \end{pmatrix}. \]
Its columns define a $K-$basis of $R_1$, given by ${\cal F}$. There are 16 distinct 2-element sums of the columns of $A$, and 31 distinct 3-element sums. The corresponding monomials form a basis of the monomial algebra generated by ${\rm in}_\prec({\cal F})$, in degree 2 and 3 respectively. Figure \ref{Picture} shows these monomials in the lattice $\mathbb{Z}^3$. A basis for $R_2$ is
\begin{align*} 
t_0^2 \cdot \{ \phi_1^2,  \phi_0\phi_1,  \phi_1\phi_2,  \phi_1\phi_4,  \phi_0^2,  \phi_0\phi_2,  \phi_0\phi_4,  \phi_1\phi_5,  \phi_4^2,  \phi_0\phi_3,  \phi_0\phi_5,  \phi_2\phi_5,  \phi_4\phi_5,  \phi_3^2,  \phi_3\phi_5,  \phi_5^2 \}.
\end{align*}
This was obtained via Proposition \ref{prop:basisforRd}, by representing each of the dots at level $2$ as a sum of two dots at level $1$. For instance, $(2,0,4) = (1,0,2) + (1,0,2)$ associates the basis element $b_{(2,0,4)} = t_0^2\phi_1^2 \in R_2$ to the monomial $t_0^2t_2^4$.

\end{example}

\begin{figure}[h] 
\centering
\includegraphics[height = 5cm]{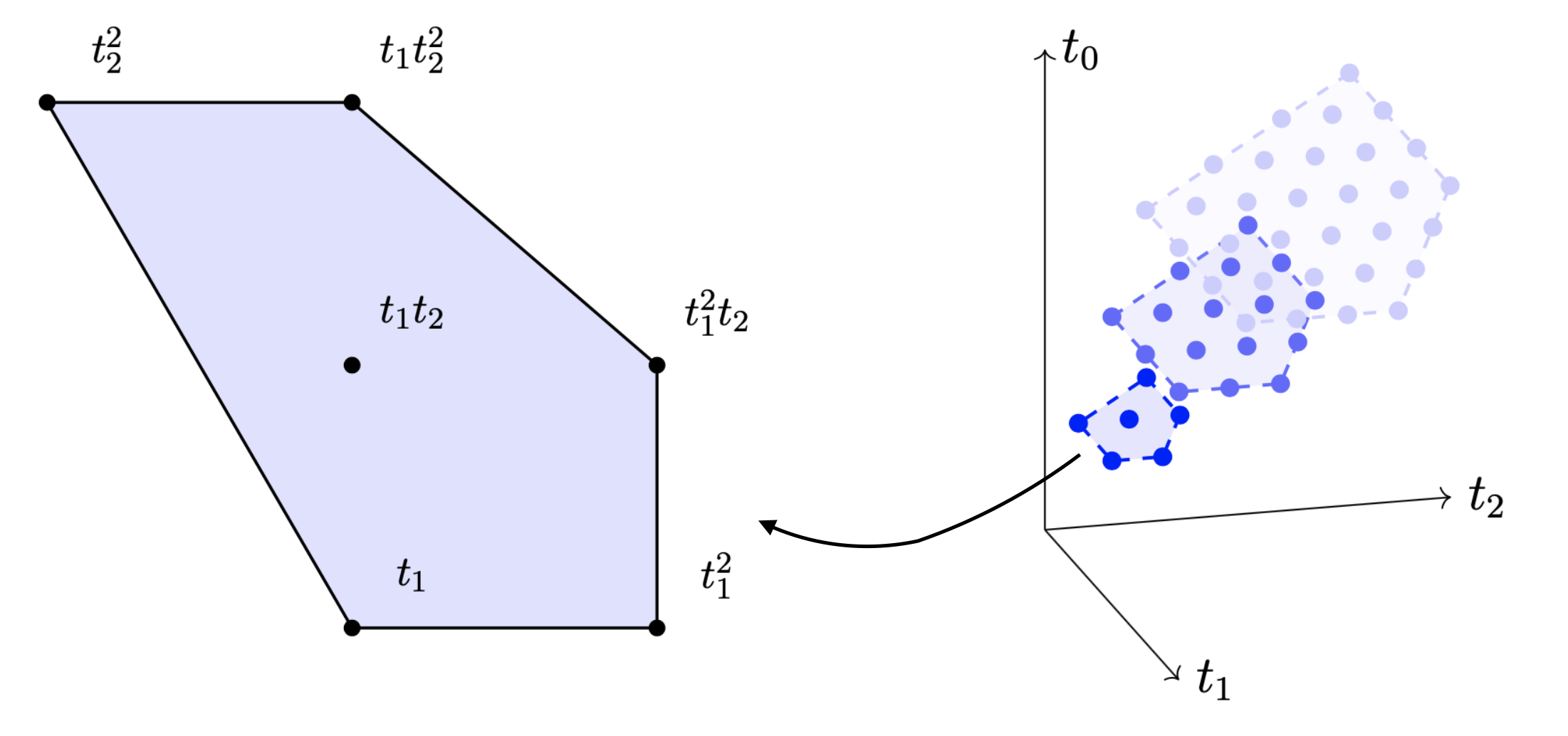}
\caption{Basis of the algebra $R$ from Example \ref{KB example} in degrees $1,2,3$. }
\label{Picture}
\end{figure}

\begin{remark} \label{rem:ehrhart}
In Example \ref{k-basis es}, every lattice point in the convex hull of $2\cdot A$ and $3\cdot A$ corresponds to an element in the basis of $R_2$ and $R_3$, respectively. In general, the inclusion $d \cdot A \subseteq {\rm conv}(d \cdot A) \cap \mathbb{Z}^{n+1}$ might be strict (even for $d =1$). When equality holds for all $d \geq 0$, the Hilbert function ${\rm HF}_R(d)$ is equal to the \emph{Ehrhart polynomial} of $ {\rm conv}(A)$. This happens precisely when the semigroup generated by ${\rm in}_\prec({\cal F})$ is saturated. 
\end{remark}
\begin{example} \label{ex:duffingkhov}
   The set of polynomials $\mathcal{F}= \{t_0\phi_0,\dots, t_0\phi_4\}$ from Example \ref{ex:duffing} is a Khovanskii basis with respect the weight vector $\omega=(1,0,-1)$ for the algebra they generate. The analog of Figure \ref{Picture} for this algebra is \cite[Figure 3]{breiding2022algebraic}. The matrix $A$ is
    \[ A= \begin{pmatrix}
     1 &1 &1  & 1 & 1   \\
        0 &1 &0  & 1 & 0   \\
        0 & 0 & 1 & 2 & 3
    \end{pmatrix}. \]
   The semigroup generated by ${\rm in}_\prec({\cal F})$ is not staturated and $A \subsetneq {\rm conv}(A) \cap \mathbb{Z}^3$. 
\end{example}
\begin{example} \label{ex:grasskhov}
Let ${\rm Gr}(k,m)$ be the Grassmannian of $(k-1)$-planes in $\proj{m-1}_K$ in its Pl\"ucker embedding. Our parameters are $n = k(m-k)$ and $\ell = \binom{m}{k}-1$. The homogeneous coordinate ring $R =K[{\rm Gr}(k,m)]$ is generated by the $k\times k$-minors of a $(k\times m)$-matrix of indeterminates $(t_{ij})$. These form a Khovanskii basis for $R$ with respect to any diagonal term order, i.e., a term order that selects the main diagonal term of the minor as initial term \cite[Theorem 11.8]{sturmfelsgroebner}. 
\end{example}

We close the section by pointing out that if $t_0 \phi_0, \ldots, t_0 \phi_\ell$ form a Khovanskii basis, then the degree of the variety $X$ equals the normalized volume of the polytope ${\rm conv}(A)$. This is implied by \emph{Kushnirenko's theorem} in toric geometry \cite[Theorem 3.16]{telen2022introduction}. In \cite{kaveh2012newton}, Kaveh and Khovanskii generalized this observation. Their result replaces convex lattice polytopes by \emph{Newton-Okounkov bodies}. We do not elaborate on this here.

\section{Khovanskii-Macaulay matrices} \label{sec:3}

We work under the assumption that ${\cal F} = \{ t_0\phi_0, \ldots, t_0 \phi_\ell \}$ is a Khovanskii basis for $K[{\cal F}]$ with respect to some weight vector $\omega \in \mathbb{R}^{n+1}$. As above, $X \subset \mathbb{P}^\ell_{\overline{K}}$ is  the unirational variety parameterized by $\phi$. Recall from the Introduction that our aim is to find all points $x \in X$ satisfying \eqref{eq:homsystem}. In this section, we build structured matrices which are used to compute these points. This will be detailed in Section \ref{sec:4}, where we build an eigenvalue problem from their right nullspace, such that the eigenvalues encode the roots. In the case where $\phi_i$ are monomials, the matrices of this section are the \emph{Macaulay matrices} which are ubiquitous in computer algebra methods for solving polynomial systems \cite{emiris1999matrices,telen2020thesis}. We use the name \emph{Khovanskii-Macaulay matrices}, emphasizing that we exploit the Khovanskii basis structure. We explain the idea by means of an example.

\begin{example} \label{ex:duffing2}
    We turn back to Example \ref{ex:duffing}, and consider the instance 
    \begin{equation}  \label{eq:f1f2} f_1 \,  = \,  1 + 3t_1 + 5t_2 + 7 t_1(t_1^2+t_2^2), \quad f_2 \, = \, 11 + 13t_1 + 17t_2 + 19t_2(t_1^2+t_2^2). 
    \end{equation}
    for concreteness. We present three different Macaulay matrix constructions. The first two are classical. The third one uses Khovanskii bases, and will be used to solve \eqref{eq:homsystem} below. The most standard Macaulay matrix of degree $d$ associated to \eqref{eq:f1f2} has columns indexed by monomials in $t_1, t_2$ of degree at most $d$, and rows by monomial multiples of $f_1, f_2$ of degree at most $d$. Here is an example for $d = 4$: 
    \begin{equation} \label{eq:MP2}
    \kbordermatrix{
    & 1 & t_1 & t_1^2  & t_1^3  & t_1^4 &  t_2  & t_1t_2 & t_1^2t_2 &  t_1^3t_2 &   t_2^2 &  t_1t_2^2 &   t_1^2t_2^2 &   t_2^3 &   t_1t_2^3 &   t_2^4\\
    f_1 &1 & 3 & 0 & 7 & 0 & 5 & 0 & 0 & 0 & 0 & 7 & 0 & 0 & 0 & 0 \\
t_1 \cdot f_1&0 & 1 & 3 & 0 & 7 & 0 & 5 & 0 & 0 & 0 & 0 & 7 & 0 & 0 & 0 \\
t_2 \cdot f_1 &0 & 0 & 0 & 0 & 0 & 1 & 3 & 0 & 7 & 5 & 0 & 0 & 0 & 7 & 0 \\
f_2&11 & 13 & 0 & 0 & 0 & 17 & 0 & 19 & 0 & 0 & 0 & 0 & 19 & 0 & 0 \\
t_1 \cdot f_2&0 & 11 & 13 & 0 & 0 & 0 & 17 & 0 & 19 & 0 & 0 & 0 & 0 & 19 & 0 \\
t_2 \cdot f_2 &0 & 0 & 0 & 0 & 0 & 11 & 13 & 0 & 0 & 17 & 0 & 19 & 0 & 0 & 19 \\
    }
    \end{equation}
    The same matrix is obtained by homogenizing $f_1$ and $f_2$, and indexing the columns by the 15 monomials in three variables of degree 4 (as opposed to \emph{at most} 4). Matrices like \eqref{eq:MP2} are used to solve $f_1 = f_2 = 0$ on $\mathbb{P}^2$, which is why we denote it by $M_{\mathbb{P}^2}(4)$. 
    
    Now consider the equations \eqref{eq:homsysduffing}. The surface $X$ is defined by 3 polynomials: 
    \begin{equation} \label{eq:implicitduffing} x_1x_4 - x_2x_3 \, = \, x_1^2x_2 + x_2^3 - x_4x_0^2 \, = \, x_1^3 + x_1x_2^2 - x_3x_0^2 \, = \, 0 \quad \text{in } \mathbb{P}^{4}.  \end{equation}
    In accordance with \eqref{eq:f1f2}, the polynomials $F_i$ are defined as follows: 
    \begin{equation} \label{eq:F1F2}
    F_1 \, = \, x_0 + 3 \, x_1 + 5 \, x_2 + 7 \, x_3, \quad F_2 \, = \, 11 x_0 + 13 \, x_1 + 17 \, x_2 + 19 \, x_4. 
    \end{equation}
    The ideal $J$ of the solutions to \eqref{eq:homsystem} is generated by $F_1, F_2$, together with the three polynomials in \eqref{eq:implicitduffing}. The Macaulay matrix in degree $2$ for this set of generators is 
    \small
\[
  \kbordermatrix{
& x_0^2 & x_0x_1 & x_1^2 & x_0x_2 & x_1x_2 & x_2^2 & x_0x_3 & x_1x_3 & x_2x_3 & x_3^2 & x_0x_4 & x_1x_4 & x_2x_4 & x_3x_4 & x_4^2 \\
x_0 \cdot F_1 &1 & 3 & 0 & 5 & 0 & 0 & 7 & 0 & 0 & 0 & 0 & 0 & 0 & 0 & 0 \\
x_1 \cdot F_1  &0 & 1 & 3 & 0 & 5 & 0 & 0 & 7 & 0 & 0 & 0 & 0 & 0 & 0 & 0 \\
x_2 \cdot F_1 &0 & 0 & 0 & 1 & 3 & 5 & 0 & 0 & 7 & 0 & 0 & 0 & 0 & 0 & 0 \\
\vdots & \vdots & \vdots & \vdots & \vdots & \vdots & \vdots & \vdots & \vdots & \vdots & \vdots & \vdots & \vdots & \vdots & \vdots & \vdots\\
x_3 \cdot F_2 &0 & 0 & 0 & 0 & 0 & 0 & 11 & 13 & 17 & 0 & 0 & 0 & 0 & 19 &  0 \\
x_4 \cdot F_2 &0 & 0 & 0 & 0 & 0 & 0 & 0 & 0 & 0 & 0 & 11 & 13 & 17 & 0 & 19 \\
x_1x_4-x_2x_3 &0 & 0 & 0 & 0 & 0 & 0 & 0 & 0 & -1 & 0 & 0 & 1 & 0 & 0 & 0 \\
  }
\]
\normalsize

\noindent with rows indexed by degree 2 elements $g$ of the ideal $J$, and columns indexed by monomials $x^\alpha$ of degree 2. Denoting this matrix by $M_{\mathbb{P}^4}(2)$ with entries $M_{\mathbb{P}^4}(2)_{g,x^\alpha}$, we read off that $g = \sum_{\alpha} M_{\mathbb{P}^4}(2)_{g,x^\alpha} \, x^\alpha$. This construction represents graded pieces of $J$ in the coordinate ring $S = K[x_0, x_1, x_2,x_3,x_4]$ of $\mathbb{P}^4$. Clearly, the matrices $M_{\mathbb{P}^4}(d)$ grow much faster than $M_{\mathbb{P}^2}(d)$ with $d$. There is an important geometric difference between these two constructions. For large enough $d$, the matrix $M_{\mathbb{P}^2}(d)$ can be used to compute the 9 solutions of $f_1 = f_2 = 0$ on $\mathbb{P}^2$. Here \emph{large enough} means $d \geq 5$, see below. On the other hand, for large enough $d$, $M_{\mathbb{P}^4}(d)$ can be used to compute the 5 solutions of $f_1 = f_2 = 0$ on $X \subset \mathbb{P}^4$. This time \emph{large enough} means $d \geq 3$. Computing the 5 solutions on $X$ is more true to our goal in \eqref{eq:homsystem}, so we want to keep this feature.

What we propose in this paper is an alternative construction $M_X(d)$ which works directly in the coordinate ring $K[X] = S/I(X)$ of $X$. This reduces the size of the matrix, while still computing on $X$. Our ideal is $\langle F_1, F_2 \rangle \subset K[X]$. In degree 2 we get
\small
\[
 \kbordermatrix{%
& x_0^2 & x_0x_1 & x_1^2 & x_0x_2 & x_1x_2 & x_2^2 & x_0x_3 & x_1x_3 & x_2x_3 & x_3^2 & x_0x_4 & x_2x_4 & x_3x_4 & x_4^2 \\
x_0 \cdot F_1 &1 & 3 & 0 & 5 & 0 & 0 & 7 & 0 & 0 & 0 & 0  & 0 & 0 & 0 \\
x_1 \cdot F_1 &0 & 1 & 3 & 0 & 5 & 0 & 0 & 7 & 0 & 0 & 0  & 0 & 0 & 0 \\
x_2 \cdot F_1 &0 & 0 & 0 & 1 & 3 & 5 & 0 & 0 & 7 & 0 & 0  & 0 & 0 & 0 \\
x_3 \cdot F_1 &0 & 0 & 0 & 0 & 0 & 0 & 1 & 3 & 5 & 7 & 0  & 0 & 0 & 0 \\
x_4 \cdot F_1 &0 & 0 & 0 & 0 & 0 & 0 & 0 & 0 & 3 & 0 & 1  & 5 & 7 & 0 \\
x_0 \cdot F_2 &11& 13 & 0 & 17 & 0 & 0 & 0 &0 &0 & 0 & 19  & 0 & 0 & 0 \\
x_1 \cdot F_2 &0 & 11 & 13 & 0 & 17 & 0 & 0 &0& 19 & 0 & 0  & 0 & 0 & 0 \\
x_2 \cdot F_2 &0 & 0 & 0 & 11 & 13 & 17 & 0&0 & 0 & 0 & 0  & 19 & 0 & 0 \\
x_3 \cdot F_2 &0 & 0 & 0 & 0 & 0 & 0 & 11&13 & 17 & 0 & 0  & 0 & 19 & 0 \\
x_4 \cdot F_2 &0 & 0 & 0 & 0 & 0 & 0 & 0 & 0 & 13 & 0 & 11  & 17 & 0 & 19 \\
  }.
\]
\normalsize

\noindent This matrix, denoted $M_X(2)$, is slightly smaller than $M_{\mathbb{P}^4}(2)$. Its columns are indexed by a basis for $K[X]_2$, which has 14 elements instead of 15. There is no column indexed by $x_1x_4$. 
The fact that $x_1x_4 - x_2x_3 = 0$ on $X$ has been taken into account by adding the column previously indexed by $x_1x_4$ to column $x_2x_3$. 
For general $d$, the rows of $M_{X}(d)$ will be indexed by all monomial multiples $x^\alpha \cdot F_i$, where $x^\alpha$ runs over a basis of $K[X]_{d-{\rm deg}(F_i)}$.
The matrices $M_X(d)$ can be constructed for any projective variety $X \subset \mathbb{P}^\ell$. However, for it to be practical, we need an efficient way of constructing a basis for $K[X]_d$.
We did this for $K[X]_2$ via Proposition \ref{prop:basisforRd}, using the fact that ${\cal F} = \{t_0 \phi_0, \ldots, t_0  \phi_4 \}$ is a Khovanskii basis for $K[{\cal F}]$ (Example \ref{ex:duffingkhov}). The basis consists~of 
\begin{equation}  \label{eq:xtot} 
x_0^{\alpha_0}x_1^{\alpha_1}x_2^{\alpha_2}x_3^{\alpha_3}x_4^{\alpha_4} \, \longleftrightarrow \, b_\beta \, = \, t_0^2 \cdot \phi_0(t)^{\alpha_0}\phi_1(t)^{\alpha_1}\phi_2(t)^{\alpha_2}\phi_3(t)^{\alpha_3}\phi_4(t)^{\alpha_4}, 
\end{equation}
where $x^\alpha$ runs over the 14 monomials indexing the columns of $M_X(2)$. 
The $\omega$-leading term of this basis element is $t^\beta$, where $\beta = \sum_{i=0}^4 \alpha_i \cdot {\rm in}_\omega(t_0 \cdot \phi_i) \in 2 \cdot A$. The last row of $M_X(2)$ reads $x_4 \cdot F_2 = 13 x_2x_3 + 11 x_0x_4 + 17 x_2 x_4 + 19 x_4^2 \mod I(X)$. This identity can be verified by plugging in $x_i = \phi_i(t)$. To emphasize the role of Khovanskii bases in the construction of our matrices, we call $M_X(d)$ a \emph{Khovanskii-Macaulay matrix}. 

We have now seen three matrix constructions $M_{\mathbb{P}^2}(d), M_{\mathbb{P}^4}(d), M_{X}(d)$ associated to the equations $f_1 = f_2 = 0$. To compare their efficiency, we need to know for which degree $d$ these matrices allow us to compute the solutions. We will answer this more generally in the case where $f_1$ and $f_2$ are generic and homogeneous of degree $d$ in the $\phi_i$, cf.~\eqref{eq:system}. In the case of $\mathbb{P}^2$, one needs to use the \emph{Macaulay bound} $\deg(f_1) + \deg(f_2) - 1$, where $\deg(f_i) = 3d$ is the degree in the $t$-variables \cite[Algorithm 4.2]{telen2020thesis}. Hence, we must construct $M_{\mathbb{P}^2}(6d - 1)$. The number of solutions in $\mathbb{P}^2$ is $9d^2$. Out of these, $4d^2$ lie ``at infinity''. For the other two constructions, we will show in Section \ref{sec:5} that it suffices to use $M_{\mathbb{P}^4}(2d+1)$, and $M_X(2d+1)$. The number of solutions on $X$ is $5d^2$. Here is a summary of the three matrix sizes for small $d$:

\begin{center}
\begin{tabular}{c|cccc}
$d$                   & 1              & 2                & 3                & 4                \\ \hline
$M_{\mathbb{P}^2}(6d-1)$ & $12 \times 21$ & $42 \times 78$   & $90 \times 171$  & $156 \times 300$ \\
$M_{\mathbb{P}^4}(2d+1)$ & $37 \times 35$ & $135 \times 126$ & $406 \times 330$ & $1002\times 715$ \\
$M_{X}(2d+1)$            & $28 \times 28$ & $56 \times 71$   & $94 \times 134$  & $142 \times 217$
\end{tabular}
\end{center}

Notice that $M_X(2d+1)$ quickly becomes the smallest among these three matrices when $d$ increases. The size of $M_\bullet(d)$ grows like the Hilbert function of $\bullet$. 
\end{example}

With this example in mind, we now define Khovanskii-Macaulay matrices for our general setup from the Introduction. Let $X \subset \mathbb{P}^\ell$ be the unirational variety parameterized by $\phi$. Suppose ${\cal F} = \{t_0 \phi_0, \ldots, t_0 \phi_\ell \}$ is a Khovanskii basis for $K[{\cal F}] \subset K[t_0, \ldots, t_n]$. Let $A \in (n+1) \times (\ell + 1)$ be the matrix of leading exponents of ${\cal F}$. For any positive integer $d$, let $\{b_{d,\beta}\}_{\beta \in d \cdot A}$ be the basis of $K[X]_d = K[{\cal F}]_d$ from Proposition \ref{prop:basisforRd}.

\begin{defn}
Let $f_i, i = 1, \ldots, s$ be as in \eqref{eq:fi}. The \emph{Khovanskii-Macaulay matrix} (KM matrix) of $(f_1, \ldots, f_s)$ in degree $d$ has rows indexed by $(i,\gamma)$, with $i \in \{1, \ldots, s\}, \gamma \in (d-d_i) \cdot A$, and columns indexed by $\beta \in d \cdot A$. Its entries $M_X(d)_{(i,\gamma),\beta}$ are defined by  
    \begin{equation} \label{eq:expandmac}
    b_{d-d_i,\gamma} \cdot t_0^{d_i} \cdot  f_i \, = \, \sum_{\beta \in d \cdot A} M_X(d)_{(i,\gamma),\beta} \,  b_{d,\beta}. \end{equation}
\end{defn}
The rows of $M_X(d)$ simply expand $b_{d-d_i,\gamma} \cdot  t_0^{d_i} \cdot  f_i \in K[{\cal F}]_d$ in the basis $\{b_{d,\beta}\}_{\beta \in d \cdot A}$. The number of rows of $M_X(d)$ is $\sum_{i = 1}^s |(d-d_i) \cdot A| = \sum_{i=1}^s {\rm HF}_X(d-d_i)$, and the number of columns is $|d \cdot A| = {\rm HF}_X(d)$.
Here are some easy facts about KM matrices. 
\begin{prop} \label{prop:macprop}
    The row span of $M_X(d)$ is isomorphic to $I_d$, where $I = \langle F_1, \ldots, F_s \rangle \subset K[X]$ is generated by the $F_i$ from \eqref{eq:homsystem}. The rank of $M_X(d)$ equals ${\rm HF}_{I}(d)$ and the (right) kernel of $M_X(d)$ has dimension ${\rm HF}_{K[X]/I}(d)$. 
\end{prop}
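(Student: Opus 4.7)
The plan is to identify each row of $M_X(d)$ with a homogeneous element of $I_d$ and then argue that these elements span $I_d$. Recall the isomorphism $K[X] \cong R$ from Section \ref{sec:2}: it identifies $K[X]_d$ with $R_d = \{t_0^d \cdot p(\phi_0, \ldots, \phi_\ell) : p \in K[x_0,\ldots,x_\ell]_d\}$, sending the class of $x_i$ to $t_0 \phi_i$. By the defining relation \eqref{eq:expandmac}, the row of $M_X(d)$ labeled by $(i,\gamma)$ is precisely the coordinate vector of $b_{d-d_i,\gamma}\cdot t_0^{d_i}\cdot f_i \in R_d$ in the basis $\{b_{d,\beta}\}_{\beta \in d\cdot A}$ of $R_d$ supplied by Proposition \ref{prop:basisforRd}. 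Under the isomorphism $R_d \cong K[X]_d$, this element corresponds to $b_{d-d_i,\gamma}\cdot F_i \in K[X]_d$ (where now $b_{d-d_i,\gamma}$ is viewed as a class in $K[X]_{d-d_i}$ via any choice of preimage monomial $x_{i_1}\cdots x_{i_{d-d_i}}$ with $\sum_j \alpha_{i_j} = \gamma$; different choices give the same class modulo $I(X)$, so the row is well-defined).

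Next I would show the row span equals $I_d$. Since $I = \langle F_1, \ldots, F_s \rangle$ is generated by the homogeneous elements $F_i$ of degree $d_i$, we have
\[ I_d \, = \, \sum_{i=1}^{s} K[X]_{d-d_i}\cdot F_i. \]
By Proposition \ref{prop:basisforRd} the Khovanskii basis assumption guarantees that $\{b_{d-d_i,\gamma}\}_{\gamma \in (d-d_i)\cdot A}$ is a $K$-basis of $K[X]_{d-d_i}$, so the family $\{b_{d-d_i,\gamma}\cdot F_i\}_{(i,\gamma)}$ spans $I_d$ over $K$. Consequently, the image of the row span of $M_X(d)$ in $K[X]_d$ under the basis identification $K^{|d\cdot A|}\xrightarrow{\sim} K[X]_d$ coincides with $I_d$.

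The remaining statements are a formal consequence. The rank of $M_X(d)$ is the dimension of its row span, which equals $\dim_K I_d = \mathrm{HF}_{I}(d)$. The number of columns of $M_X(d)$ is $|d \cdot A| = \dim_K K[X]_d = \mathrm{HF}_{K[X]}(d)$, so by rank-nullity the dimension of the right kernel equals
\[ \mathrm{HF}_{K[X]}(d) - \mathrm{HF}_{I}(d) \, = \, \mathrm{HF}_{K[X]/I}(d). \]

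The only subtle point is the well-definedness of the rows, which rests on the fact that the Khovanskii basis property forces $\{b_{d,\beta}\}$ to be an actual $K$-basis of $K[X]_d$ (not just a spanning set), so that the expansion coefficients in \eqref{eq:expandmac} are unique and independent of the auxiliary choices of index tuples used to represent the $b_{d-d_i,\gamma}$. Everything else reduces to the standard rank-nullity theorem.
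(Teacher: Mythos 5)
The paper itself offers no proof of this proposition --- it is stated as an ``easy fact'' immediately after the definition of $M_X(d)$ --- so there is no official argument to compare against. Your argument is the natural one and is correct in its essentials: the rows are the coordinate vectors of $b_{d-d_i,\gamma}\cdot t_0^{d_i} f_i \in R_d$ in the basis $\{b_{d,\beta}\}$, these elements correspond to $b_{d-d_i,\gamma}\cdot F_i$ under $R \cong K[X]$, they span $I_d = \sum_i K[X]_{d-d_i}\cdot F_i$ because $\{b_{d-d_i,\gamma}\}_{\gamma}$ is a $K$-basis of $K[X]_{d-d_i}$ by Proposition \ref{prop:basisforRd}, and the rank and kernel statements follow from rank--nullity applied to $0 \to I_d \to K[X]_d \to (K[X]/I)_d \to 0$.

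One parenthetical claim is false, however: different index tuples $(i_1,\ldots,i_{d-d_i})$ with $\sum_j \alpha_{i_j} = \gamma$ do \emph{not} in general give the same class modulo $I(X)$. For the del Pezzo surface of Example \ref{KB example}, $\beta = (2,2,2)$ arises both as $\alpha_2+\alpha_2$ and as $\alpha_1+\alpha_3$, yet $\phi_2^2 \neq \phi_1\phi_3$ as polynomials, so $x_2^2$ and $x_1x_3$ are distinct classes in $K[X]_2$; the two candidate basis elements merely share the same $\omega$-leading term. The matrix $M_X(d)$ therefore genuinely depends on the choices made in Proposition \ref{prop:basisforRd} (this is what the paper means by ``the statement does not depend on the choice'' --- the \emph{basis property} is choice-independent, not the basis elements themselves). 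Fortunately your proof does not need the rows to be choice-independent: for any fixed choice, each $\{b_{d-d_i,\gamma}\}_\gamma$ is a basis of $K[X]_{d-d_i}$ and $\{b_{d,\beta}\}_\beta$ is a basis of $K[X]_d$, so the expansion coefficients in \eqref{eq:expandmac} are unique and the row span is $I_d$ regardless. Simply delete the claim about equal classes and say instead that the proposition holds for every admissible choice of index tuples; the rest of the argument stands.
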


Our next section discusses how to use $M_X(d)$ for solving the equations \eqref{eq:homsystem}.

\section{Eigenvalue theorem} \label{sec:4}
This section shows how to pass from a Khovanskii-Macaulay matrix $M_X(d)$, for large enough $d$, to an eigenvalue problem which reveals the solutions of \eqref{eq:homsystem}. In order to construct $M_X(d)$ as in Section \ref{sec:3}, we keep assuming that ${\cal F} = \{ t_0\phi_0, \ldots, t_0 \phi_\ell \}$ is a Khovanskii basis for $K[{\cal F}]$. The polynomials $f_i, F_i$ and the variety $X$ are those from the Introduction. We also assume that $I = \langle F_1, \ldots, F_s \rangle \subset K[X]$ defines a zero-dimensional subscheme $V_X(I)$ of $X$ of degree $\delta < \infty$. 
The following definition helps to clarify \emph{large enough} in this section's first sentence. It uses \emph{saturation} of $I$, which is 
\begin{equation}\label{def:saturation}
I^{\rm sat} \, = \, \{ f \in K[X] \, : \, K[X]_d \cdot f \subset I, \text{ for some } d \in \mathbb{N} \}.     
\end{equation} 

\begin{defn} \label{def:regularity}
Let $X, I, I^{\rm sat}$ be as above. The \emph{regularity} of $I$ is the set of degrees
\[ {\rm Reg}(I) \, = \, \{ d \in \mathbb{Z} \, : \, {\rm HF}_{K[X]/I}(d) \, = \,  \delta \text{ and } I_d \, = \, I_d^{\rm sat} \}. \]
\end{defn}
\begin{example} \label{ex:regduffing}
    Let $I = \langle F_1, F_2 \rangle \subset K[X]$ be the ideal generated by $F_1,F_2$ from \eqref{eq:F1F2} in the coordinate ring of our surface from Example \ref{ex:duffing2}. We verify using \texttt{Macaulay2} that $I = I^{\rm sat}$ and the Hilbert function of $K[X]/I$ in degrees $0,1,2,3,4,\ldots$ equals $1,3,5,5,5,\ldots$. We conclude that ${\rm Reg}(I) = \{ d \in \mathbb{Z} \, : \, d \geq 2\}$.
\end{example}
We will see below (Theorem \ref{thm:eigenvaluethm2}) that we will end up working with $M_{X}(d+e)$, where $e >0$ and both $d$ and $d+e$ are contained in the regularity ${\rm Reg}(I)$. 

From Definition \ref{def:regularity}, it follows in particular that when $d+e \in {\rm Reg}(I)$, the kernel of $M_X(d+e)$ has dimension $\delta$ (Proposition \ref{prop:macprop}). This kernel plays an important role for solving. The reason will become clear in Theorem \ref{thm:eigenvaluethm2}. For any degree $t$, we write $N_X(t)$ for the transpose of a kernel matrix of $M_X(t)$. That is, $N_X(t)$ is of size ${\rm HF}_{K[X]/I}(t) \times {\rm HF}_{K[X]}(t)$, and it has rank ${\rm HF}_{K[X]/I}(t)$. For $t = d+e \in {\rm Reg}(I)$, this matrix represents a map $N_X(d+e): K[X]_{d+e} \longrightarrow K^\delta$. Let $N = N_X(d+e)$ and let $p \in K[X]_{e}$ be any element of degree $e$. We define a linear map 
\begin{equation} \label{eq:Nh} N_p : K[X]_{d} \longrightarrow K^{\delta}, \quad \text{by setting} \quad N_p(g) = N(gp). \end{equation}

\begin{example}[$d = 2, e = 1$] \label{ex:duffing3}
We continue Example \ref{ex:duffing2} and describe a particular choice for the kernel matrix $N_X(3)$ of $M_X(3)$. We have seen in Example \ref{ex:regduffing} that ${\rm HF}_{K[X]/I}(3) = 5$. Hence, the matrix $N=N_X(3)$ has size $5\times 28$ and rank $5$. Each row of $N$ represents a linear functional vanishing on $I_3$. Some natural candidates for such functionals are the evaluations $g \mapsto g(z_i)$ at the five solutions $\{z_1, \ldots, z_5\} = V_X(I)$. In fact, we will see that these form a basis of the kernel. Hence, we can take $N$ to be 
\[ N:K[X]_3\rightarrow K^5, \quad N(g)=\big (
    g(z_1),  \dots , g(z_5)\big)^T. \]
This is represented by the matrix $(z_1^{{\cal M}_3}~ \ldots~ z_5^{{\cal M}_3})^T$, where the column vectors $z_i^{{\cal M}_3} \in \mathbb{C}^{28}$ are constructed as follows. The columns of $M_X(3)$ are indexed by a set of monomials ${\cal M}_3$ which is a basis of $K[X]_3$. This is the third matrix in Example \ref{ex:duffing2}. We let $z_i^{{\cal M}_3}$ be the vector of these monomials evaluated at the $i$-th solution $z_i$. More precisely, we represent the solution $z_i$ by any set of projective coordinates in $\mathbb{P}^4$, and plug this into our monomials. Picking a different set of projective coordinates would only scale the $i$-th row of $N$, and thus lead to an alternative kernel map. 

For a polynomial $p\in K[X]_1$ we consider the linear map $N_p$ given by 
\[ N_p:K[X]_2\rightarrow K^5, \quad N_p(g)=N(gp)=\big(
    g(z_1)p(z_1),\dots , g(z_5)p(z_5)\big )^T. \]
Letting ${\cal M}_2$ be a monomial basis of $K[X]_2$, we observe that $N_p$ is represented by 
\[N_p={\rm diag}\begin{pmatrix} p(z_1) ~ \cdots ~ p(z_5) \end{pmatrix}\cdot\begin{pmatrix} z_1^{{\cal M}_2} & \dots & z_5^{{\cal M}_2}
\end{pmatrix}^T.  \]
Here the first factor is a $5 \times 5$ diagonal matrix with diagonal entries $p(z_1), \ldots, p(z_5)$.
\end{example}
 Example \ref{ex:duffing3} hints at a general way to represent the kernel $N_X(d)$, for $d \in {\rm Reg}(I)$ in the case where $V_X(I)$ defines a set of $\delta$ reduced points on $X$.
\begin{lemm} \label{lem:basis}
    Suppose $I \subset K[X]$ defines a reduced subscheme $V_X(I)$ of $X$, which consists of $\delta < \infty$ points $\{z_1, \ldots, z_\delta\} \subset X$. Let $d \in {\rm Reg}(I)$, and let ${\cal M}_d = \{ x^{\alpha_1}, \ldots, x^{\alpha_{m}} \}$ be the basis for $K[X]_d$ indexing the columns of $M_X(d)$. The vectors $z_i^{{\cal M}_d}  = (z_i^\alpha)_{\alpha \in {\cal M}(d)}$ form a basis for $N_X(d) = \ker M_X(d)$.   
\end{lemm}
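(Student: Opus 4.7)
The plan is to verify three claims in turn: (i) each $z_i^{{\cal M}_d}$ lies in $\ker M_X(d)$, (ii) the $\delta$ vectors $z_1^{{\cal M}_d}, \ldots, z_\delta^{{\cal M}_d}$ are linearly independent, and (iii) the kernel has dimension exactly $\delta$. Combining the three gives the lemma.

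For (i), fix any affine representative of $z_i \in \mathbb{P}^\ell_{\overline K}$, so that each $z_i^\alpha \in \overline K$ is well-defined. By Proposition~\ref{prop:macprop} each row of $M_X(d)$ records, in the basis ${\cal M}_d$, the coefficient expansion of some element $g = b_{d-d_i,\gamma}\cdot t_0^{d_i}\cdot f_i \in I_d$; write $g = \sum_\alpha c_\alpha x^\alpha$ in $K[X]_d$. Since $z_i \in V_X(I)$ and $g \in I$, the pairing of $z_i^{{\cal M}_d}$ with this row equals $\sum_\alpha c_\alpha z_i^\alpha = g(z_i) = 0$. Hence $z_i^{{\cal M}_d} \in \ker M_X(d)$ for every $i$.

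For (ii), consider the evaluation map $\mathrm{ev}\colon K[X]_d \to \overline K^{\delta}$, $g \mapsto (g(z_1),\ldots,g(z_\delta))$. Its kernel is the set of degree-$d$ homogeneous elements of $K[X]$ vanishing on all the $z_i$. Because $V_X(I)$ is reduced and zero-dimensional, the saturated ideal $I^{\rm sat}$ coincides with the radical ideal of homogeneous polynomials vanishing on $\{z_1,\ldots,z_\delta\}$. Thus $\ker(\mathrm{ev}) = I^{\rm sat}_d$, which equals $I_d$ by the regularity assumption $d \in {\rm Reg}(I)$; moreover the same assumption gives $\dim K[X]_d / I_d = \delta$. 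So $\mathrm{ev}$ factors through an injection $K[X]_d/I_d \hookrightarrow \overline K^{\delta}$ of spaces of the same dimension (after base change to $\overline K$), hence is surjective. In the basis ${\cal M}_d$, the matrix representing $\mathrm{ev}$ is the $\delta \times m$ matrix whose rows are exactly $z_1^{{\cal M}_d}, \ldots, z_\delta^{{\cal M}_d}$, so surjectivity of $\mathrm{ev}$ means these $\delta$ rows are linearly independent.

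Claim (iii) is then immediate: Proposition~\ref{prop:macprop} together with $d \in {\rm Reg}(I)$ yields $\dim \ker M_X(d) = \mathrm{HF}_{K[X]/I}(d) = \delta$. Combining (i)--(iii), the $\delta$ linearly independent kernel vectors $z_i^{{\cal M}_d}$ span the full $\delta$-dimensional kernel, hence form a basis. The only conceptually non-trivial step is the identification $\ker(\mathrm{ev}) = I^{\rm sat}_d$, which is where reducedness of $V_X(I)$ is essential; the remainder is linear algebra of Vandermonde flavour and dimension counting.
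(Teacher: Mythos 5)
Your proof is correct and follows essentially the same route as the paper: both arguments hinge on the identification $I_d = I^{\rm sat}_d = \{g \in K[X]_d : g(z_1) = \cdots = g(z_\delta) = 0\}$ together with the dimension count $\dim \ker M_X(d) = {\rm HF}_{K[X]/I}(d) = \delta$ from Proposition~\ref{prop:macprop} and $d \in {\rm Reg}(I)$; whether one phrases the conclusion as ``the evaluation functionals span the annihilator of $I_d$'' (the paper) or ``the vectors $z_i^{{\cal M}_d}$ are independent elements of a $\delta$-dimensional kernel'' (you) is just dual bookkeeping. The one point to note is that the step you assert in a single sentence --- that reducedness of $V_X(I)$ forces $I^{\rm sat}$ to equal the full homogeneous vanishing ideal of the point set --- is where the paper spends most of its proof, arguing via dehomogenization, membership in the dehomogenized ideal, and clearing denominators; it is a standard fact, but in a self-contained write-up you should either prove it or give a precise reference.
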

\begin{proof}
We first show that a homogeneous element $g \in K[X]$ is contained in $I^{\rm sat}$ \eqref{def:saturation} if and only if $g(z_i) = 0$ for $i = 1, \ldots, \delta$. 

    If $g\in I^{\rm sat}$ is homogeneous, then $K[X]_e\cdot g\subset I$ for some $e\in \N$ and, in particular, $x_i^e\cdot g\in I$ for all $i=0,\dots,\ell$. For every $z \in V_X(I)$ we can find a suitable $0 \leq j \leq \ell$ such that the $j$-th homogeneous coordinate $z_j$ is nonzero. We may thus assume $z_j=1$ and we have $0=(x_j^e\cdot g )(z)=g(z)$. On the other hand, suppose that $I=\langle F_1,\dots,F_s \rangle$ and $g$ is a homogeneous polynomial satisfying $g(z)=0$ for each $z\in V_X(I)$. 
We consider the dehomogenization $\hat{g}_i= g(\frac{x_0}{x_i},\dots, \frac{x_{i-1}}{x_i}, 1, \frac{x_{i+1}}{x_i},\dots,\frac{x_\ell}{x_i})$. Since $V_X(I)$ is reduced, $\hat{g}_i$ belongs to the dehomogenized ideal $\langle \hat{F}_{1_i},\dots,\hat{F}_{s_i}\rangle\subset (K[X]_{x_i})_0$ for each $i=0,\dots,\ell $:
\begin{equation} \label{dehomog}
    \hat{g}_i= \hat{h}_1 \hat{F}_{1_i}+\dots + \hat{h}_s \hat{F}_{s_i},
\end{equation}  
Pick $\alpha\in\N$ large enough such that $x_i^{\alpha}$ clears denominators in every term of \eqref{dehomog} for every $i$. In particular we have that $\alpha> {\rm deg}(g) $ and 
\[ x_i^\alpha \hat{g_i}= x_i^{\alpha-\deg(g)}g \in I. \]
It follows that, for $e=(\ell+1)(\alpha-\deg(g)-1)+1$, $K[X]_e\cdot g \in I$ and $g$ belongs to $I^{\rm sat}$.

   By Proposition \ref{prop:macprop}, the kernel of $M_X(d)$ consists of functionals $v \in {\rm Hom}_K(K[X]_d,K)$ such that $v(I_d) = 0$. Since $d \in {\rm Reg}(I)$, we have that $I_d = I^{\rm sat}_d$, and hence $g \in I_d$ if and only if the evaluation functional ${\rm ev}_{z_i}: g \mapsto g(z_i)$ vanishes on $g$. Hence $\ker M_X(d)$ is generated by ${\rm ev}_{z_1}, \ldots, {\rm ev}_{z_\delta}$. These functionals are represented by the vectors $z_i^{{\cal M}_d}$ from the lemma. Since $d \in {\rm Reg}(I)$, the dimension of $\ker M_X(d)$ equals $\delta$. Therefore, the vectors $z_i^{{\cal M}_d}$ are linearly independent, and they form a basis for $\ker M_X(d)$. 
\end{proof}

Our main theorem in this section states that, if $d, d+e \in {\rm Reg}(I)$, then from the kernel $N_X(d+e)$ of the Khovanskii-Macaulay matrix $M_X(d+e)$ we can construct eigenvalue problems which reveal the solutions.
Suppose $I \subset K[X]$ defines a reduced subscheme $V_X(I)$ of $X$, which consists of $\delta < \infty$ points. Let $d, d+e \in {\rm Reg}(I)$ and consider $h \in K[X]_e$ such that $h$ is non-vanishing at the points in $V_X(I)$. Let $N: K[X]_{d+e} \rightarrow K^\delta$ be given by $N_X(d+e)$, and $N_p: K[X]_d \rightarrow K^\delta$ is defined as in \eqref{eq:Nh}.

\begin{thm} \label{thm:eigenvaluethm2}
 With the above assumptions and notation, there exists a subspace $B \subset K[X]_d$ such that the restriction $(N_h)_{|B}$ is invertible. Moreover, for any $p \in K[X]_e$, the eigenvalues of $(N_h)_{|B}^{-1} \circ (N_p)_{|B} : B \longrightarrow B$ are $\frac{p}{h}(z)$, for $z \in V_X(I)$. 
\end{thm}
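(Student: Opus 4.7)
The plan is to reduce the statement to elementary linear algebra by using Lemma \ref{lem:basis} to identify the kernel map $N$ with evaluation at the points $z_1,\ldots,z_\delta$. A preliminary observation simplifies matters: the eigenvalues of $(N_h)_{|B}^{-1}\circ (N_p)_{|B}$ are invariant under replacing $N$ by $C\circ N$ for an invertible $C\in\mathrm{GL}(K^\delta)$, since this transforms both $N_h$ and $N_p$ by the same left multiplication, and the composition cancels $C$. Hence, without loss of generality, I may choose any convenient representation of the kernel of $M_X(d+e)$. By Lemma \ref{lem:basis} applied in degree $d+e$, I take $N$ to be the evaluation map
\[
N : K[X]_{d+e}\longrightarrow K^\delta,\qquad N(g) \,=\, (g(z_1),\ldots,g(z_\delta))^T,
\]
using fixed projective representatives of the points $z_i$.

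Next, I introduce the auxiliary evaluation $\tilde N : K[X]_d \to K^\delta$ with $\tilde N(g)=(g(z_1),\ldots,g(z_\delta))^T$ under the same representatives. For $p\in K[X]_e$ we have $(gp)(z_i)=g(z_i)p(z_i)$, so
\[
N_p(g) \,=\, N(gp) \,=\, D_p\cdot \tilde N(g), \qquad D_p \,=\, \mathrm{diag}\bigl(p(z_1),\ldots,p(z_\delta)\bigr),
\]
and in particular $N_h = D_h\cdot \tilde N$. Applying Lemma \ref{lem:basis} now in degree $d$ (using $d\in\mathrm{Reg}(I)$), the vectors $z_i^{{\cal M}_d}$ form a basis of $\ker M_X(d)$. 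Dually, this says $\tilde N$ is surjective and $\ker \tilde N = I_d$, so $\tilde N$ induces an isomorphism $K[X]_d/I_d \xrightarrow{\sim} K^\delta$.

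For the first claim, choose any subspace $B\subset K[X]_d$ complementary to $I_d$, i.e.~$B\oplus I_d = K[X]_d$. Then $\tilde N_{|B}:B\to K^\delta$ is an isomorphism, and since $h(z_i)\neq 0$ for all $i$, the diagonal matrix $D_h$ is invertible. Consequently $(N_h)_{|B} = D_h\cdot \tilde N_{|B}$ is invertible.

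For the eigenvalue claim, set $\alpha = \tilde N_{|B}:B\to K^\delta$. Then
\[
(N_h)_{|B}^{-1}\circ (N_p)_{|B} \,=\, \alpha^{-1}\circ D_h^{-1}\circ D_p\circ \alpha \,=\, \alpha^{-1}\circ D_{p/h}\circ \alpha,
\]
where $D_{p/h}=\mathrm{diag}\bigl((p/h)(z_1),\ldots,(p/h)(z_\delta)\bigr)$. The ratio $(p/h)(z_i)=p(z_i)/h(z_i)$ is well-defined at each point because $p$ and $h$ have the same degree $e$ (so the ratio is invariant under rescaling the projective representative) and $h(z_i)\neq 0$. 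The displayed endomorphism of $B$ is conjugate to $D_{p/h}$, so its eigenvalues are precisely $\frac{p}{h}(z)$ for $z\in V_X(I)$.

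The only delicate point is the setup identifying $N_p$ with $D_p\cdot \tilde N$, which depends on using one consistent choice of projective representatives for the $z_i$ throughout. Once this compatibility is in place and the dual statement of Lemma \ref{lem:basis} is invoked to give $\ker\tilde N = I_d$, the rest is a short diagonalization argument.
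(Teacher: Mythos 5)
Your proof is correct and follows essentially the same route as the paper's: identify $N$ with evaluation at the points via Lemma \ref{lem:basis}, factor $N_p = D_p\cdot\tilde N$, and conclude by conjugation to a diagonal matrix. The only cosmetic difference is that you take $B$ to be an arbitrary complement of $I_d$ in $K[X]_d$, whereas the paper takes $B$ to be the span of a suitable subset of $\delta$ monomials of ${\cal M}_d$; both choices make $(N_h)_{|B}$ invertible and yield the same eigenvalue computation.
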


\begin{proof}
    Let $V_X(I) = \{ z_1, \ldots, z_\delta \}$. The matrix $M_X(d+e)$ has columns indexed by a set of monomials ${\cal M}_{d+e}$ which are a basis for $K[X]_{d+e}$. We denote by $z_i^{{\cal M}_{d+e}}$ the vector whose entries are these monomials evaluated at $z_i \in V_X(I)$. The vectors $\{z_1^{{\cal M}_{d+e}}, \ldots, z_\delta^{{\cal M}_{d+e}} \}$ form a basis for the right kernel of $M_X(d+e)$ (Lemma \ref{lem:basis}).
    This means we can use these vectors for the rows of $N=N_X(d+e)$, such that it represents the map $K[X]_{d+e} \rightarrow K^\delta$ given by $N(g) = (g(z_1), \ldots, g(z_\delta))$. 
   Like in Example \ref{ex:duffing3}, if ${\cal M}_{d}$ is a monomial basis for $K[X]_d$, the map $N_p: K[X]_d \rightarrow K^\delta$ is represented by
   \[ N_p={\rm diag}\begin{pmatrix} p(z_1) ~ \cdots ~ p(z_\delta) \end{pmatrix}\cdot\begin{pmatrix} z_1^{{\cal M}_{d}} & \dots & z_\delta^{{\cal M}_{d}}
\end{pmatrix}^T.\]
    By Lemma \ref{lem:basis}, the matrix $\begin{pmatrix} z_1^{{\cal M}_{d}} & \dots & z_\delta^{{\cal M}_{d}}
\end{pmatrix}^T$ has rank $\delta$. If $h \in K[X]_e$ is such that $h(z_i) \neq 0$ for all $z_i \in V_X(I)$, we can find a subset of monomials $\{ x^{\beta_1}, \ldots, x^{\beta_\delta} \} \subset {\cal M}_d$ such that the submatrix of $N_h$ whose columns are indexed by $x^{\beta_1}, \ldots, x^{\beta_\delta}$ is invertible. That submatrix is $(N_h)_{|B}$, where $B$ is the $K$-span of $x^{\beta_1}, \ldots, x^{\beta_\delta}$. Let us write $Z$ for the corresponding invertible $\delta \times \delta$ submatrix of $\begin{pmatrix} z_1^{{\cal M}_{d}} & \dots & z_\delta^{{\cal M}_{d}}
\end{pmatrix}^T$, such that $(N_p)_{|B} = {\rm diag}\begin{pmatrix} p(z_1) ~ \cdots ~ p(z_\delta) \end{pmatrix} \cdot Z$. The composition $(N_h)_{|B}^{-1} \circ (N_f)_{|B}$ is 
    \[ Z\inv \cdot {\rm diag} \begin{pmatrix} \frac{f(z_1)}{h(z_1)}  & \cdots & \frac{f(z_\delta)}{h(z_\delta)} \end{pmatrix}  \cdot Z, \]
    which proves the claim about the eigenvalues.
\end{proof}

Theorem \ref{thm:eigenvaluethm2} is at the heart of our eigenvalue method for solving $F_1 = \cdots = F_s = 0$ on $X$: we use the eigenstructure of the matrices $(N_h)_{|B}^{-1} (N_p)_{|B}$ to find the coordinates of the solutions $z_i$. It is a generalization of the classical \emph{eigenvalue-eigenvector theorem} from computational algebraic geometry \cite[Chapter 2, \S 4, Theorem 4.5]{cox2006using}. Like that theorem, it extends to the case with multiplicities. That is, one can drop the reducedness assumption. We omit this here to keep the presentation simple. 
The proof of Theorem \ref{thm:eigenvaluethm2} shows that the matrices $(N_h)_{|B}^{-1} (N_p)_{|B}$ share a set of eigenvectors $Z$, and they commute pair-wise. We discuss how to use Theorem \ref{thm:eigenvaluethm2} in more detail in Section \ref{sec:6}. The next section is concerned with determining ${\rm Reg}(I)$ in the case of complete intersections.

\section{Regularity} \label{sec:5}
We investigate the regularity ${\rm Reg}(I)$ from Definition \ref{def:regularity} in the case where $I \subset K[X]$ is generated by $n = \dim X$ elements. While we work on unirational varieties with Khovanskii basis parameterizations in other sections, we here only need the following. 
\begin{assum} \label{assum:ACM}
The projective variety $X \subset \mathbb{P}^\ell$ is arithmetically Cohen-Macaulay. That is, its homogeneous coordinate ring $K[X]$ is a Cohen-Macaulay ring. 
\end{assum} 
Note that if $X$ is parameterized by a Khovanskii basis ${\cal F}$, then $K[X]$ is Cohen-Macaulay if the same holds for the toric algebra $K[{\rm in}_\prec({\cal F})]$, which is easy to check.

Assumption \ref{assum:ACM} makes the saturation condition in Definition \ref{def:regularity} trivial: 
\begin{lemm} \label{lem:IisIsat}
Let $X$ satisfy Assumption \ref{assum:ACM} and let $I = \langle F_1, \ldots, F_n \rangle \subset K[X] $ be a homogeneous ideal such that $V_X(I)$ is zero-dimensional. Then we have $I=I^{\rm sat}$.
\end{lemm}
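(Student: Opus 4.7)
The plan is to use the Cohen-Macaulay hypothesis to show that $F_1,\ldots,F_n$ form a regular sequence in $K[X]$, whence $K[X]/I$ is again Cohen-Macaulay of dimension one, and then conclude that the irrelevant maximal ideal is not associated to $K[X]/I$.

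First I would record the dimension count. Since $X \subset \mathbb{P}^\ell$ has dimension $n$, its homogeneous coordinate ring $K[X]$ has Krull dimension $n+1$. The assumption that $V_X(I)$ is zero-dimensional as a projective subscheme means that the affine cone $V(I) \subset \mathbb{A}^{\ell+1}$ has Krull dimension one, so $\dim K[X]/I = 1$. Hence the ideal $I$ has height exactly $n$, and since it is generated by $n$ elements in the Cohen-Macaulay ring $K[X]$, the sequence $F_1,\ldots,F_n$ is a regular sequence (this is the standard characterization of regular sequences in Cohen-Macaulay rings via height).

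Next I would pass to the quotient. Quotienting a Cohen-Macaulay ring by a regular sequence preserves the Cohen-Macaulay property, so $K[X]/I$ is a graded Cohen-Macaulay ring of dimension one. In particular, $\operatorname{depth}_{\mathfrak{m}}(K[X]/I) = 1$, where $\mathfrak{m} = (x_0,\ldots,x_\ell)$ is the irrelevant maximal ideal. Thus $\mathfrak{m}$ contains a non-zero-divisor on $K[X]/I$, and consequently $\mathfrak{m} \notin \operatorname{Ass}(K[X]/I)$.

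Finally, I would translate this back to the saturation. By definition, the quotient $I^{\rm sat}/I$ coincides with the $\mathfrak{m}$-torsion submodule $H^0_{\mathfrak{m}}(K[X]/I) = \{g \in K[X]/I : \mathfrak{m}^r g = 0 \text{ for some } r\}$. This submodule is nonzero if and only if $\mathfrak{m}$ is an associated prime of $K[X]/I$. Since the previous step rules this out, we conclude $I^{\rm sat} = I$. The only subtle point to be careful about is the edge behaviour: when $n = 0$, the ideal $I$ is trivial and the statement is vacuous, while for $n \geq 1$, the height of $\mathfrak{m}$ strictly exceeds that of the associated primes of $K[X]/I$, so the argument goes through uniformly. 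I expect the only real obstacle to be making sure the regular-sequence step is cleanly cited rather than reproved; the rest is formal.
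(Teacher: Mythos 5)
Your proof is correct, and it reaches the same underlying fact as the paper --- namely that the irrelevant maximal ideal $\mathfrak{m}$ is not an associated prime of $K[X]/I$ --- but by a different route. The paper argues directly on a primary decomposition $I = Q_1 \cap \cdots \cap Q_s$: if some $f \in I^{\rm sat}$ avoided a component $Q_1$, then $K[X]_d \subseteq \sqrt{Q_1}$ for some $d$, contradicting the Unmixedness Theorem (\cite[Corollary 18.14]{eisenbud1995commutative}), which forces every $\sqrt{Q_i}$ to have codimension exactly $n$. You instead first upgrade $F_1,\ldots,F_n$ to a regular sequence (a step the paper also takes, but only later, in the proof of Theorem \ref{thm:regfromhreg}), deduce that $K[X]/I$ is Cohen--Macaulay of dimension one and hence of depth one, and conclude via the identification $I^{\rm sat}/I = H^0_{\mathfrak{m}}(K[X]/I)$. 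Both arguments are standard consequences of Assumption \ref{assum:ACM}; yours is phrased in the homological language of depth and local cohomology and has the small bonus of establishing the regular-sequence fact that the paper needs anyway, while the paper's is more elementary in that it only invokes unmixedness and avoids $H^0_{\mathfrak{m}}$. Your identification of $I^{\rm sat}/I$ with the $\mathfrak{m}$-torsion is consistent with the paper's definition \eqref{def:saturation}, since $K[X]$ is generated in degree one, so $K[X]_d \cdot f \subseteq I$ for some $d$ is equivalent to $\mathfrak{m}^r f \subseteq I$ for some $r$. No gap.
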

\begin{proof}
Consider the primary decomposition $I=Q_1 \cap \cdots \cap Q_s$ and suppose that $f\in I^{\rm sat}\setminus I $. We may assume that $f\not\in Q_1$. There exists some $d\in \N$ such that $K[X]_d\cdot f\subseteq Q_1$, hence we have that $K[X]_d\subseteq \sqrt{Q_1}$, and the latter ideal has codimension $n$ by the Unmixedness Theorem \cite[Corollary 18.14]{eisenbud1995commutative}. Then we have an inclusion $V(K[X]_d)\supseteq V(\sqrt{Q_1})$ between varieties of codimension greater than $n$ and equal to $n$, respectively, that is a contradiction. It follows that $I=I^{\rm sat}$.
\end{proof}
A first bound on ${\rm Reg}(I)$ is expressed in terms of the \emph{Hilbert regularity} of $K[X]$.
\begin{defn}\label{def:hilbreg}
Let $M$ be a graded $K[X]$-module with Hilbert polynomial ${\rm HP}_M:\mathbb{Z} \rightarrow \mathbb{Z}$. The \emph{Hilbert regularity} ${\rm HReg}(M)$ of $M$ is the smallest integer $d$ such that ${\rm HF}_M(e) = {\rm HP}_M(e)$ for all $e \geq d$.
\end{defn}
As before, let $I = \langle F_1, \ldots, F_n \rangle$ with ${\rm deg}(F_i) = d_i$. Let $[n] = \{ 1, \ldots, n \}$ and for a subset $J \subset [n]$, we write $d_J = \sum_{i \in J} d_i$. 
\begin{thm} \label{thm:regfromhreg}
    Let $X$ satisfy Assumption \ref{assum:ACM} and let $I = \langle F_1, \ldots, F_n \rangle \subset K[X]$ be a homogeneous ideal with ${\rm deg}(F_i) = d_i$, such that $V_X(I)$ is zero-dimensional. We have the inclusion $\{d_{[n]} + w \in \mathbb{Z} \, : \, w \geq {\rm HReg}(K[X]) \} \subset {\rm Reg}(I)$.
\end{thm}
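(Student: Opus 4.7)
The plan is to verify the two conditions in Definition \ref{def:regularity} separately: first the saturation condition $I_d = I_d^{\rm sat}$, then the Hilbert function equality ${\rm HF}_{K[X]/I}(d) = \delta$, where $\delta = \deg V_X(I)$.

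The saturation condition is for free: Lemma \ref{lem:IisIsat} immediately gives $I = I^{\rm sat}$ (hence $I_d = I_d^{\rm sat}$ in every degree $d$) from Assumption \ref{assum:ACM} plus the zero-dimensionality of $V_X(I)$. So the rest of the work is purely Hilbert-function bookkeeping.

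For the Hilbert function equality I would proceed as follows. Since $\dim X = n$, the ring $K[X]$ has Krull dimension $n+1$; because $V_X(I)$ is zero-dimensional as a projective scheme, $K[X]/I$ has Krull dimension $1$. So the $n$ homogeneous forms $F_1, \ldots, F_n$ cut the Krull dimension down by exactly $n$ inside the Cohen-Macaulay ring $K[X]$, and are therefore a regular sequence. This gives a Koszul resolution
\[
0 \longrightarrow K[X](-d_{[n]}) \longrightarrow \cdots \longrightarrow \bigoplus_{|J|=k} K[X](-d_J) \longrightarrow \cdots \longrightarrow K[X] \longrightarrow K[X]/I \longrightarrow 0,
\]
from which the Hilbert function identity
\[
{\rm HF}_{K[X]/I}(d) \, = \, \sum_{J \subseteq [n]} (-1)^{|J|}\, {\rm HF}_{K[X]}(d-d_J)
\]
follows by additivity of the Hilbert function along exact sequences.

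Now fix $d = d_{[n]} + w$ with $w \geq {\rm HReg}(K[X])$. For every $J \subseteq [n]$ we have $d - d_J \geq d - d_{[n]} = w \geq {\rm HReg}(K[X])$, so by Definition \ref{def:hilbreg} we may replace each ${\rm HF}_{K[X]}(d-d_J)$ by ${\rm HP}_{K[X]}(d-d_J)$. The resulting alternating sum equals ${\rm HP}_{K[X]/I}(d)$ (the same Koszul identity on the polynomial level). Finally, since $K[X]/I$ has Krull dimension $1$, its Hilbert polynomial is a constant, and that constant is the degree $\delta$ of the zero-dimensional scheme $V_X(I)$. Combining these, ${\rm HF}_{K[X]/I}(d) = \delta$, and together with the saturation step this proves $d \in {\rm Reg}(I)$.

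The only non-routine step is the regular-sequence claim, i.e.\ that the length of any maximal regular sequence inside the ideal $I$ is $n$; in a Cohen-Macaulay ring this length equals the codimension of $I$, and the codimension is $n$ precisely because $\dim K[X] - \dim K[X]/I = (n+1) - 1 = n$. Everything else is a standard Koszul/Hilbert-polynomial manipulation.
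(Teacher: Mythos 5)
Your proposal is correct and follows essentially the same route as the paper's proof: saturation is dispatched by Lemma \ref{lem:IisIsat}, the regular-sequence property of $F_1,\ldots,F_n$ in the Cohen--Macaulay ring $K[X]$ yields the Koszul alternating-sum formula for ${\rm HF}_{K[X]/I}$, and for $w \geq {\rm HReg}(K[X])$ each Hilbert function is replaced by the corresponding Hilbert polynomial, whose alternating sum is the constant $\delta$. The only difference is cosmetic: you spell out the dimension-count justification for the regular-sequence claim, which the paper simply asserts.
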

\begin{proof}
    By Assumption \ref{assum:ACM} and ${\rm dim} \, V_X(I) = 0$, the polynomials $F_1, \ldots, F_n$ form a regular sequence in $K[X]$. Therefore, a free resolution of the ideal $I$ is given by the Koszul complex. Restricted to degree $d_{[n]} + w$, this complex is
    \begin{equation} \label{eq:koszulACM}
    0 \rightarrow K[X]_k \rightarrow \bigoplus_{i=1}^n K[X]_{d_i+w}  \rightarrow  
    \cdots \rightarrow \bigoplus_{J \in \binom{[n]}{n-1}} K[X]_{d_J+w}
    \rightarrow I_{d_{[n]} + w} \rightarrow 0.
    \end{equation}
    This easily implies the following alternating-sum dimension formula for all $w$:
    \[ {\rm dim}_K (K[X]/I)_{d_{[n]}+w} \, = \, \sum_{p= 0}^n (-1)^{n-p} \cdot \sum_{J \in \binom{[n]}{p}} \dim_K K[X]_{d_J + w}. \]
    If $w \geq {\rm HReg}(K[X])$, the right hand side equals the polynomial
    \[ \sum_{p= 0}^n (-1)^{n-p} \cdot \sum_{J \in \binom{[n]}{p}} {\rm HP}_{K[X]}(d_J + w). \]
    This agrees with ${\rm HP}_{K[X]/I}(d_{[n]} + w)$ for large enough $w$, hence it must equal the constant polynomial $\delta$ in Definition \ref{def:regularity}. Since $I = I^{\rm sat}$ (Lemma \ref{lem:IisIsat}), the theorem follows. 
\end{proof}

\begin{example}
Let $X \subset \mathbb{P}^2$ be the curve $\{y^3-xz^2 = 0 \}$. The Hilbert polynomial is ${\rm HP}_{K[X]}(t) = 3 \cdot t$, and the Hilbert regularity is ${\rm HReg}(K[X]) = 1$. The ideal $I = \langle c_1 x + c_2 y + c_3 z \rangle \subset K[X]$ defines $\delta = 3$ points on $X$. We have ${\rm HF}_{K[X]/I}(t) = {\rm HF}_{K[X]}(t) - {\rm HF}_{K[X]}(t-1)$, which for $t = 0, 1, 2, 3,\ldots$ gives $1, 2, 3, 3, \ldots$. Hence, the inclusion $\{ d_{[1]} + k : k \geq 1 \} = \{2,3,4,\ldots \} \subset {\rm Reg}(I)$ from Theorem \ref{thm:regfromhreg} is an equality in this example. 
\end{example}

Theorem \ref{thm:regfromhreg} only leads to effective degree bounds for ${\rm Reg}(I)$ if the Hilbert regularity ${\rm HReg}(K[X])$ can be easily computed.  
\begin{example}[lines in $\mathbb{P}_K^{3}$]
    The Hilbert regularity might be negative. The Grassmannian ${\rm Gr}(2,4)$ of lines in $\mathbb{P}_K^3$ is a hypersurface $X \subset \mathbb{P}^5$ via its Pl\"ucker embedding. Its defining ideal is generated by a single polynomial, namely the Pl\"ucker quadric. One easily computes that ${\rm HP}_{K[X]}(t) = \frac{1}{12}(t+1)(t+2)^2(t+3)$, and ${\rm HReg}(K[X]) = -3$: 
    \begin{center}
\begin{tabular}{c|ccccccccccccc}
 $t$ & $-6$ &  $-5$ & $-4$   & $-3$ & $-2$ & $-1$ & 0 & 1 & 2 & 3 & 4 & 5 & 6  \\ \hline
 ${\rm HF}_{K[X]}(t)$& 0  & 0  & 0  & 0 & 0 & 0 & 1 & 6 & 20 & 50 & 105 & 196 & 336  \\
 ${\rm HP}_{K[X]}(t)$& 20 & 6 & 1 & 0 & 0 & 0 & 1  & 6 & 20 & 50 & 105 & 196 & 336 \\
\end{tabular}
    \end{center}
    This is a particular case of Proposition \ref{prop:hreggrass}, which gives a general formula for the Hilbert regularity of Grassmannians.
\end{example}

\begin{prop} \label{prop:hreggrass}
Let ${\rm Gr}(k,m)$ be the Grassmannian of $(k-1)$-planes in $\mathbb{P}_K^{m-1}$ in its Pl\"ucker embedding. We have ${\rm HReg}(K[{\rm Gr}(k,m)]) = -m+1$.
\end{prop}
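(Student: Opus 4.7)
My plan is to compute the Hilbert regularity via cohomological methods on $X = \mathrm{Gr}(k,m)$. Because the Grassmannian is projectively normal in its Pl\"ucker embedding, the homogeneous coordinate ring coincides with the section ring: $K[X]_d \cong H^0(X,\mathcal{O}(d))$ for $d \geq 0$, so $\mathrm{HF}_{K[X]}(d) = h^0(X,\mathcal{O}(d))$ for $d \geq 0$ and $\mathrm{HF}_{K[X]}(d) = 0$ for $d < 0$. On the other hand, $\mathrm{HP}_{K[X]}(d) = \chi(X,\mathcal{O}(d))$ as a polynomial in $d$. The key geometric input I will use is the canonical bundle formula $\omega_X \cong \mathcal{O}(-m)$, which makes $X$ Fano of index $m$.

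For the upper bound $\mathrm{HReg}(K[X]) \leq -m+1$, if $d \geq -m+1$ then $\mathcal{O}(d) \otimes \omega_X^\vee \cong \mathcal{O}(d+m)$ is ample, so Kodaira vanishing yields $H^i(X,\mathcal{O}(d)) = 0$ for every $i \geq 1$. Hence $\chi(X,\mathcal{O}(d)) = h^0(X,\mathcal{O}(d))$. For $d \geq 0$ this equals $\mathrm{HF}_{K[X]}(d)$ by projective normality; for $-m+1 \leq d < 0$ both sides vanish. In either case $\mathrm{HF}_{K[X]}(d) = \mathrm{HP}_{K[X]}(d)$, so all such $d$ lie in the range where the Hilbert function agrees with the Hilbert polynomial.

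For sharpness I would show that $\mathrm{HF}_{K[X]}(-m) \neq \mathrm{HP}_{K[X]}(-m)$. The left-hand side equals $0$. For the right-hand side, Serre duality gives $H^i(X,\mathcal{O}(-m)) \cong H^{\dim X - i}(X,\mathcal{O}_X)^\vee$. Since $X$ is rational (or by Bott's theorem on homogeneous line bundles), $H^j(X,\mathcal{O}_X) = 0$ for $j > 0$, and $H^0(X,\mathcal{O}_X) \cong K$. Therefore $H^{\dim X}(X,\mathcal{O}(-m)) \cong K$ and all other cohomologies vanish, giving $\mathrm{HP}_{K[X]}(-m) = \chi(X,\mathcal{O}(-m)) = (-1)^{k(m-k)} \neq 0$. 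Combined with the upper bound, this yields $\mathrm{HReg}(K[X]) = -m+1$.

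No step is a serious obstacle: each ingredient (projective normality of the Pl\"ucker embedding, $\omega_X \cong \mathcal{O}(-m)$, Kodaira vanishing, Serre duality, and the vanishing of $H^i(X,\mathcal{O}_X)$ for $i > 0$) is classical. The only care required is in the bookkeeping for negative degrees: the algebraic definition forces $\mathrm{HF}_{K[X]}(d) = 0$ for $d < 0$, and this must be coupled with the cohomological identification $\chi = h^0$ coming from Kodaira in order to deduce equality with the Hilbert polynomial on the range $-m+1 \leq d < 0$.
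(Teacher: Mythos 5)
Your argument is correct over a field of characteristic zero and takes a genuinely different route from the paper's. The paper quotes Hodge's explicit product formula for ${\rm HP}_{K[{\rm Gr}(k,m)]}(t)$ (via Mukai's book for $k=2$) and reads off directly that this polynomial agrees with the Hilbert function for all $t \geq -m+1$ while being nonzero at $t=-m$. You replace the explicit formula by three cohomological facts --- projective normality ($K[X]_d \cong H^0(X,\mathcal{O}(d))$), the canonical bundle formula $\omega_X \cong \mathcal{O}(-m)$, and vanishing of higher cohomology --- plus Serre duality for sharpness. Your approach is more conceptual: it explains \emph{why} the answer is one more than the negative of the Fano index, rather than verifying it from a closed formula, and your value $\chi(X,\mathcal{O}(-m)) = (-1)^{k(m-k)} = 1$ matches the entry ${\rm HP}_{K[X]}(-4)=1$ in the paper's ${\rm Gr}(2,4)$ table. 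The price is that you need more machinery, and in particular you inherit that machinery's hypotheses.

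That is where the one genuine issue lies: the proposition, like the rest of the paper, is stated over an arbitrary field $K$ (finite fields are used throughout Sections \ref{sec:6} and \ref{sec:7}), and two of your inputs are characteristic-zero statements. Kodaira vanishing can fail in positive characteristic, and ``$X$ rational $\Rightarrow H^{j}(X,\mathcal{O}_X)=0$ for $j>0$'' also fails in general. Both can be repaired, but the repair must be made explicit. Either (i) replace Kodaira and Bott by Kempf's vanishing theorem for homogeneous spaces $G/P$, which gives $H^{j}(X,\mathcal{O}(d))=0$ for $j>0$, $d\geq 0$ and the vanishing of all cohomology of $\mathcal{O}(d)$ for $-m<d<0$ in every characteristic; or (ii) observe that $K[{\rm Gr}(k,m)]$ has a standard monomial basis defined over $\mathbb{Z}$, so both ${\rm HF}$ and ${\rm HP}$ are independent of the characteristic and it suffices to prove the statement over $\mathbb{C}$. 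Without one of these patches, the proof as written does not cover the fields the paper actually uses; with either of them, it is complete.
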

\begin{proof}
 By Equation (8.4) in \cite{mukai2003introduction}, for $k = 2$, i.e.~$X = {\rm Gr}(2,m) \subset \mathbb{P}^{\binom{m}{2}-1}$, we have 
\begin{equation} \label{eq:HPlines}
{\rm HP}_{K[{\rm Gr}(2,m)]}(t)  \, = \, \frac{(t+1)(t+2)^2 \cdots (t+m-2)^2(t+m-1)}{(m-2)!(m-1)!}.
\end{equation}
It easily follows from \cite[Proposition 8.4]{mukai2003introduction} that this equals the Hilbert function for all $t \geq 0$, and since ${\rm HF}_{K[X]}(t) = 0$ for $t < 0$, we conclude ${\rm HReg}(K[{\rm Gr}(2,m)]) = -m + 1$. The formula \eqref{eq:HPlines}, together with the rest of the argument, generalizes for higher $k$: 
\begin{equation} \label{eq:HPgrass}
    {\rm HP}_{K[{\rm Gr}(k,m)]}(t)  \, = \,  \frac{1!\,2!\, \cdots (k-1)!}{(m-k)! \cdots (m-1)!} \cdot   \prod_{i=1}^k (t+i)(t+i+1) \cdots (t+i+m-k-1). 
\end{equation}
This agrees with the Hilbert function for $t \geq -m +1$. In particular, it vanishes for $t = -m+1, \ldots, -1$. This was first discovered by Hodge \cite{hodge1942note}.
\end{proof}

\begin{cor} \label{cor:grass}
    In the situation of Theorem \ref{thm:regfromhreg}, if $X = {\rm Gr}(k,m) \subset \mathbb{P}_K^{\binom{m}{k}-1}$, we have $n = k(m-k)$ and $\{ d_{[n]} + w :\, w \geq -m + 1\} \subset {\rm Reg}(I)$.
\end{cor}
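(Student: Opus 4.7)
The strategy is to apply Theorem \ref{thm:regfromhreg} directly, using Proposition \ref{prop:hreggrass} to supply the Hilbert regularity. The plan therefore reduces to verifying the hypotheses of Theorem \ref{thm:regfromhreg} for $X = {\rm Gr}(k,m) \subset \mathbb{P}_K^{\binom{m}{k}-1}$, namely (i) that $X$ is arithmetically Cohen-Macaulay, and (ii) that $\dim X = k(m-k)$, so that the number of equations $n$ in $I = \langle F_1, \ldots, F_n\rangle$ matches the dimension.

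The dimension statement $\dim {\rm Gr}(k,m) = k(m-k)$ is classical. For Assumption \ref{assum:ACM}, I would invoke Example \ref{ex:grasskhov}: the $k \times k$ minors $\phi_0, \ldots, \phi_\ell$ of a generic $k \times m$ matrix of indeterminates form a Khovanskii basis with respect to any diagonal term order, and the associated initial algebra $K[{\rm in}_\prec({\cal F})]$ is the Hibi ring of the distributive lattice of $k$-subsets of $[m]$, which is a normal, hence Cohen-Macaulay, toric algebra. By the remark immediately following Assumption \ref{assum:ACM}, Cohen-Macaulayness transfers from $K[{\rm in}_\prec({\cal F})]$ to $K[X]$, so $X$ is arithmetically Cohen-Macaulay. (Alternatively, one may cite Hodge's classical standard monomial theory.)

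With the hypotheses verified, Theorem \ref{thm:regfromhreg} gives
\[
\{\, d_{[n]} + w \in \mathbb{Z} \, : \, w \geq {\rm HReg}(K[{\rm Gr}(k,m)]) \,\} \, \subset \, {\rm Reg}(I),
\]
and Proposition \ref{prop:hreggrass} evaluates ${\rm HReg}(K[{\rm Gr}(k,m)]) = -m+1$. Substituting $n = k(m-k)$ then yields the claim.

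I do not anticipate any real obstacle: once Theorem \ref{thm:regfromhreg} and Proposition \ref{prop:hreggrass} are in hand, the corollary is a one-line combination. The only subtlety is citing a clean reference (or argument) for the Cohen-Macaulay property of $K[{\rm Gr}(k,m)]$; using the Khovanskii-basis perspective together with the remark after Assumption \ref{assum:ACM} keeps the proof internal to the paper.
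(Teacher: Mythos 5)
Your proposal is correct and matches the paper's (implicit) argument: the corollary is stated as an immediate combination of Theorem \ref{thm:regfromhreg} with Proposition \ref{prop:hreggrass}, and the paper gives no further proof. Your extra care in justifying Assumption \ref{assum:ACM} for the Grassmannian (via the Cohen--Macaulayness of the initial toric algebra and the remark following Assumption \ref{assum:ACM}, or via classical standard monomial theory) is a welcome addition but not a departure from the paper's route.
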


The Hilbert regularity ${\rm HReg}(K[X])$ of an $n$-dimensional projective variety $X$ can be computed from its \emph{Hilbert series} ${\rm HS}_X$. By \cite[Corollary 4.1.8]{bruns1998cohen}, there is a unique Laurent polynomial $P_X(u) = c_a u^a + c_{a+1} u^{a+1} +  \cdots + c_b u^b$ with $c_b \neq 0$ such that $P_X(1) \neq 0$ and  
\[ {\rm HS}_X(u) \,= \, \sum_{t = 0}^\infty {\rm HF}_{K[X]}(t) \, u^t \, = \, \frac{P_X(u)}{(1-u)^{n+1}}. \]
The Hilbert regularity is read from the largest exponent $b$ of $P_X$ \cite[Proposition 4.1.12]{bruns1998cohen}.
\begin{thm} \label{thm:brunsherzog}
    The Hilbert regularity ${\rm HReg}(K[X])$ of an $n$-dimensional projective variety $X$ is $b - n$, where $b$ is the largest exponent appearing in $P_X(u)$.
\end{thm}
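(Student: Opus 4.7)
The plan is to work directly with the power series expansion of ${\rm HS}_X(u)$ and compare term-by-term the actual Hilbert function values against the values predicted by the Hilbert polynomial. Using the standard identity
\[
\frac{1}{(1-u)^{n+1}} \, = \, \sum_{k \geq 0} \binom{n+k}{n} \, u^k,
\]
multiplication by $P_X(u) = \sum_{j=a}^b c_j u^j$ gives ${\rm HF}_{K[X]}(t) = \sum_{a \leq j \leq \min(t,b)} c_j \binom{n+t-j}{n}$. A natural candidate for the Hilbert polynomial is
\[
Q(t) \, = \, \sum_{j=a}^{b} c_j \binom{n+t-j}{n},
\]
where each $\binom{n+t-j}{n}$ is now interpreted as the polynomial $(n+t-j)(n+t-j-1)\cdots(t-j+1)/n!$ of degree $n$ in $t$. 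A quick calculation of the leading coefficient, which equals $P_X(1)/n! \neq 0$, confirms that $\deg Q = n$, and $Q$ visibly agrees with ${\rm HF}_{K[X]}$ for $t \gg 0$. Hence $Q = {\rm HP}_{K[X]}$.

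Next I would analyze the discrepancy
\[
Q(t) - {\rm HF}_{K[X]}(t) \, = \, \sum_{j=t+1}^{b} c_j \binom{n+t-j}{n}
\]
and determine when it vanishes. The crucial point is that, viewed as a polynomial in $t-j$, the binomial $\binom{n+t-j}{n}$ vanishes precisely when $t-j \in \{-1,-2,\ldots,-n\}$, since for these values the numerator $(n+t-j)(n+t-j-1)\cdots(t-j+1)$ contains a factor of zero. So in the sum above, only terms with $j \geq t+n+1$ contribute nontrivially:
\[
Q(t) - {\rm HF}_{K[X]}(t) \, = \, \sum_{j=t+n+1}^{b} c_j \binom{n+t-j}{n}.
\]

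This immediately yields both halves of the claim. If $t \geq b-n$, the index range $\{t+n+1,\ldots,b\}$ is empty and the discrepancy is zero. Conversely, at $t = b-n-1$ the only surviving index is $j = b$, contributing $c_b \binom{-1}{n} = (-1)^n c_b$, which is nonzero by assumption $c_b \neq 0$. Hence ${\rm HF}_{K[X]}(t) = {\rm HP}_{K[X]}(t)$ for all $t \geq b-n$ but fails at $t = b-n-1$, giving ${\rm HReg}(K[X]) = b-n$.

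I do not anticipate any serious obstacle: the only delicate point is carefully distinguishing the polynomial $\binom{n+t-j}{n}$ from its role as a series coefficient, which amounts to identifying its integer roots. Everything else is routine bookkeeping, and the result is classical (cf.\ \cite[Proposition 4.1.12]{bruns1998cohen}).
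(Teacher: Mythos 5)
Your proof is correct. The paper does not actually prove this statement—it simply cites \cite[Proposition 4.1.12]{bruns1998cohen}—and your computation (expanding $1/(1-u)^{n+1}$, identifying ${\rm HP}_{K[X]}$ with $Q(t)=\sum_j c_j\binom{n+t-j}{n}$, and noting that the discrepancy terms vanish exactly for $t-j\in\{-n,\ldots,-1\}$ so that the first failure occurs at $t=b-n-1$ with value $(-1)^n c_b\neq 0$) is a correct, self-contained rendering of that standard argument.
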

Theorem \ref{thm:intro} is a straightforward corollary of Theorems \ref{thm:brunsherzog} and \ref{thm:regfromhreg}.

\begin{example}
    The Hilbert series of $X = {\rm Gr}(2,4)$ is ${\rm HS}_{{\rm Gr}(2,4)}(u) = (1 + u)/(1-u)^5$. 
\end{example}

\begin{example} \label{ex:hregduffing}
    We use \texttt{Macaulay2} to compute the Hilbert series of the surface $X\subset \mathbb{P}^4$ defined by \eqref{eq:implicitduffing}: ${\rm HS}_X(u) = (1+2u+2u^2)/(1-u)^3$. By Theorem \ref{thm:brunsherzog}, ${\rm HReg}(K[X]) = 2 - 2 = 0$. If $I = \langle F_1, F_2 \rangle \in K[X]_1$ defines finitely many points on $X$, Theorem \ref{thm:regfromhreg} predicts $\{ d \in \mathbb{Z} \, : \, d \geq 2 \} \subset {\rm Reg}(I)$. This is confirmed by Example \ref{ex:regduffing}. 
\end{example}

\section{Computational examples} \label{sec:6}

Theorem \ref{thm:eigenvaluethm2} suggests an algorithm for computing the coordinates of $z_i \in X \subset \mathbb{P}^\ell$ from the matrices $(N_h)_{|B}^{-1} (N_p)_{|B}$. This section illustrates that algorithm via our proof-of-concept implementation in Julia, which uses the package \texttt{Oscar.jl} (v0.12.1) \cite{OSCAR}. The code is available at \url{https://mathrepo.mis.mpg.de/KhovanskiiSolving}. Throughout the section, $I \subset K[X]$ defines a reduced subscheme $V_X(I)$ of $X$, consisting of $\delta$ points.

We will use the equations \eqref{eq:f1f2} from Example \ref{ex:duffing2} to illustrate the algorithm and our Julia functions $(n=2,\ell=4,\delta=5)$. We first load \texttt{KhovanskiiSolving.jl}, which is the Julia package accompanying this paper, and \texttt{Oscar.jl}. We also fix the field $K = \mathbb{Q}$, create the ring $K[t_1,t_2]$ and define the parameterization $\phi$ from Example \ref{ex:duffing}.
\begin{minted}{julia}
using KhovanskiiSolving, Oscar
K = QQ; R, (t1,t2) = PolynomialRing(K,["t1";"t2"])
φ = [t1^0; t1; t2; t1*(t1^2+t2^2); t2*(t1^2+t2^2)]
\end{minted}
Here the first entry of \texttt{φ} uses $t_1^{\, 0}$ instead of $1$ to avoid type issues. 
Our polynomials are
\begin{minted}{julia}
f1 = φ[1] + 3*φ[2] + 5*φ[3] + 7*φ[4]
f2 = 11*φ[1] + 13*φ[2] + 17*φ[3] + 19*φ[5]
\end{minted}
It was explained in Section \ref{sec:4} that our algorithm works with the matrix $M_X(d+e)$, where $d, d+e \in {\rm Reg}(I)$ for $I = \langle F_1, F_2 \rangle \subset K[X]$. Here $F_1, F_2$ are as in \eqref{eq:F1F2}. Our current implementation assumes $e = 1$: the input \texttt{dreg} should be such that \texttt{dreg} and \texttt{dreg-1} are in ${\rm Reg}(I)$. We have seen in Examples \ref{ex:regduffing} and \ref{ex:hregduffing} that we can take \texttt{dreg = 3}. Other inputs are the degrees $d_i$ of the equations, i.e., the numbers $d_i$ in \eqref{eq:homsystem}, and a list of leading exponents of the parameterizing functions in \texttt{φ}. 
\begin{minted}{julia}
dreg = 3; degs_f = [1;1]; leadexps = [leadexp(h,[-1;0]) for h in φ]
\end{minted}
Here the function \texttt{leadexp} computes the leading exponent of $h$ with respect to the weight vector $\omega = (0,-1)$. This corresponds to the weights in Example \ref{ex:duffingkhov}. We have now gathered all input needed for our function \texttt{get\_KM}, which constructs the Khovanskii-Macaulay matrix $M_X(d)$ for $d = 3 =$ \texttt{dreg}:
\begin{minted}{julia}
KM = get_KM([f1;f2],dreg,degs_f,φ,K,(t1,t2),leadexps)   
\end{minted}
Its columns are indexed by a monomial basis of $K[X]_3$. Their exponents are \texttt{αs} in 
\begin{minted}{julia}
bs, αs = get_basis(φ,dreg,K,(t1,t2),leadexps)
\end{minted}
There are ${\rm HF}_{K[X]}(3) = 28$ monomials in this basis. The output \texttt{bs} contains the corresponding basis elements of the degree $3$ piece of the algebra $K[t_0 \phi_0, \ldots, t_0 \phi_4]$ (omitting the factor $t_0^3$). The correspondence between \texttt{bs} and \texttt{αs} is as in \eqref{eq:xtot}. 

To compute $M_X(d)$ the function \texttt{get\_KM} needs to compute the expansions \eqref{eq:expandmac}. In the current implementation, this is done via interpolation, i.e., we find the coefficients $M_X(d)_{(i,\gamma),\beta}$ by imposing that the resulting polynomial agrees with $b_{d-d_i,\gamma} \cdot f_i$ in a number of points large enough to define them uniquely. On a case-by-case basis, one can investigate if this can be made more efficient, possibly exploiting some extra structure in the algebra $K[t_0 \phi_0, \ldots, t_0 \phi_4]$. We do not elaborate on this here. 

The Khovanskii-Macaulay matrix in our example has size $(2 \cdot {\rm HF}_{K[X]}(2) ) \times {\rm HF}_{K[X]}(3)$, i.e., $28 \times 28$. Though it is not the case here,  $M_X(d)$ for $d =$ \texttt{dreg} often has (many) more rows than columns. Since we are only interested in its kernel $N_X(d)$, we might as well work with a submatrix consisting of ${\rm rank}\, M_X(d)$ linearly independent rows. This is more efficient, so we gave \texttt{get\_KM} the optional boolean input \texttt{reduce}. To compute a reduced Macaulay matrix, simply add ``\texttt{...,leadexps;reduce = true)}'' to the command \texttt{get\_KM} above. With that option, the size of \texttt{KM} is $23 \times 28$.

We now use \texttt{KM} to compute the matrices $(N_h)_{|B}^{-1} (N_p)_{|B}$ from Theorem \ref{thm:eigenvaluethm2}:
\begin{minted}{julia}
Mul, c = get_multiplication_matrices(KM,dreg,φ,K,(t1,t2),leadexps)
\end{minted}
Notice that \texttt{KM} is one of the inputs. The list \texttt{Mul} contains $\ell + 1$ matrices $(N_h)_{|B}^{-1} (N_p)_{|B}$, where $h = c_0x_0 + \cdots + c_\ell x_\ell$ is the same for all these matrices, and $p = x_j, j = 0, \ldots, \ell$. The output \texttt{c} contains the coefficients of $h$, which are chosen randomly internally. Constructing these matrices involves only basic linear algebra. The reader can find the details in our publicly available code. The name \emph{multiplication operators} comes from the interpretation of the matrices in \texttt{Mul} as multiplication endomorphisms in the $\delta$-dimensional vector space $(K[X]/I)_{\mathtt{dreg}-1}$, see \cite[Section 4.5]{telen2020thesis}.

We discuss how to compute the coordinates of the points $V_X(I) = \{z_1, \ldots, z_\delta \} \subset X$ from the matrices in \texttt{Mul}. Since the coefficients \texttt{c} are chosen randomly, we may assume
$V_X(I) \cap \{ h = 0 \} = \emptyset$. The coordinate $x_j/h$ of all points $V_X(I)$ is read from the eigenvalues of the $j$-th matrix $\mathtt{Mul}_j = (N_{x_0})_{|B}^{-1} (N_{x_j})_{|B}$ in \texttt{Mul} (Theorem \ref{thm:eigenvaluethm2}). Moreover, for $j' \neq j$, it is easy to determine which eigenvalues of $\mathtt{Mul}_j$ and $\mathtt{Mul}_{j'}$ belong to the same solution. This claim is justified by the proof of Theorem \ref{thm:eigenvaluethm2}, where we saw that all matrices $(N_h)_{|B}^{-1}(N_p)_{|B}$ have the same eigenvectors. Suppose $Z$ is an eigenvector matrix. The diagonal elements of $Z \cdot (N_{x_0})_{|B}^{-1}(N_{x_j})_{|B} \cdot Z^{-1}$ correspond to the solutions $z_1, \ldots, z_\delta$ in an ordering that is fixed by the ordering of the eigenvectors. The~function \texttt{get\_solutions} of \texttt{SolvingOnParameterizedVarieties} implements~this procedure: 
\begin{minted}{julia}
sols = get_solutions(Mul)
\end{minted}
This works when $K = \mathbb{Q}$. The result \texttt{sol} is a $\delta \times (\ell+1)$ matrix whose rows are the $\ell + 1$ homogeneous coordinates of the solutions in $V_X(I)$. 

Note that almost all these computations can be done over the ground field $K$. Only for the final eigenvalue computations we switch to the closure $\overline{K}$. In particular, if $h,p$ have coefficients in $K \subset \overline{K}$, then $(N_h)_{|B}^{-1} (N_p)_{|B}$ has entries in $K$. In this example,
\texttt{get\_solutions} returns floating point approximations of the solutions in $\mathbb{P}_{\overline{\mathbb{Q}}}^5$.

All of the above steps are executed by the wrapper function \texttt{solve\_Khovanskii}:
\begin{minted}{julia}
sols = solve_Khovanskii([f1;f2],dreg,degs_f,φ,(t1,t2),leadexps)
\end{minted}

We illustrate some more functionalities of our code in two examples. The first one deals with solving equations on the del Pezzo surface from Example \ref{KB example}. In the second, $X$ is a \emph{Bott-Samelson threefold}. The next section is devoted to the case $X = {\rm Gr}(k,m)$. 

\begin{example}[Equations on a del Pezzo surface] \label{ex:delpezzocomp}
Here is a snippet of code which solves two random degree $d$ equations $f_1 = f_2 = 0$ on the del Pezzo surface $X$ from Example \ref{KB example}, defined over a prime field $K = \mathbb{F}_p$ of characteristic $p = 9716633$:
\begin{minted}{julia}
d = 2; p = 9716633; K = GF(p)
R, (t1,t2) = PolynomialRing(K, ["t1";"t2"])
φ = [t1-t2; t2^2-t2; t1*t2-t2; t1^2-t2; t1*t2^2-t2; t1^2*t2-t2]
S, x = PolynomialRing(K, ["x$i" for i = 1:length(φ)])
evs = collect(exponents(sum(x)^d)); mons = [prod(φ.^ev) for ev in evs]
c1 = rand(1:p,length(mons)); c2 = rand(1:p,length(mons))
f1 = (c1'*mons); f2 = (c2'*mons)
degs_f = [d;d]; dreg = sum(degs_f)+1; 
leadexps = [leadexp(h,[-2;-1]) for h in φ]
Mul = get_commuting_matrices([f1;f2],dreg,degs_f,φ,K,(t1,t2),leadexps)
\end{minted}
Since eigenvalue computations over finite fields are not supported in the version of \texttt{Oscar.jl} we use, we only compute the six multiplication matrices. The command \texttt{get\_commuting\_matrices} executes \texttt{get\_KM} and \texttt{get\_multiplication\_matrices}. The formula $\mathtt{dreg} = 2d+1$ is based on Theorem \ref{thm:regfromhreg}, after computing ${\rm HReg}(K[X]) = 0$. The number of solutions is $5d^2$, which is also the size of the six matrices in \texttt{Mul}. The size of the Khovanskii-Macaulay matrix $M_X(d)$ is ${\rm HF}_{K[X]}(d)$, which equals the Ehrhart polynomial $5/2d^2 + 5/2d + 1$ of a pentagon (see Remark \ref{rem:ehrhart}). We executed this computation for $d$ up to $15$, for which it takes $1169$ seconds on a single thread of a 2.8 TB RAM machine using an Intel Xeon E7-8867 v3 processor working at 2.50 GHz.
\end{example}

\begin{example}[Equations on a Bott-Samelson variety]
We use our code to reproduce Example 10 in \cite{burr2020numerical}. This example considers a threefold $X \subset \mathbb{P}^7$ parameterized by 
\begin{minted}{julia}
K = QQ; R, (t1,t2,t3) = PolynomialRing(K, ["t1";"t2";"t3"])
φ = [t1^0; t1; t2; t3; t1*t3; t2*t3; t1*(t1*t3+t2); t2*(t1*t3+t2)]
\end{minted}
The entries of $\phi$ form a basis for the sections of an ample line bundle on a \emph{Bott-Samelson variety}. The threefold $X$ has degree 6, which is the number of solutions to \texttt{f = 0}, with
\begin{minted}{julia}
f = [1 1 1 1 1 1 1 1; 1 -2 3 -4 5 -6 7 -8; 2 3 5 7 11 13 17 19]*φ
\end{minted}
These are interpreted as three linear equations on $X$ ($d_1 = d_2 = d_3 = 1)$. A Gr\"obner basis for $I(X)$ is displayed in \cite[Example 10]{burr2020numerical}. From this, we compute in Macaulay2~that 
\[ {\rm HS}_X(u) \, = \, \frac{1 + 4u + u^2}{(1-u)^4}, \quad \text{ and thus } \quad  {\rm HReg}(K[X]) = -1\]
by Theorem \ref{thm:brunsherzog}. Relying on Theorem \ref{thm:regfromhreg}, we set ${\tt dreg} = d_1 + d_2 + d_3 = 3$:
\begin{minted}{julia}
degs_f = [1;1;1]; dreg = 3; leadexps = [leadexp(h,[0;-1;0]) for h in φ]
sols = solve_Khovanskii(f,dreg,degs_f,φ,(t1,t2,t3),leadexps)
sols = sols./sols[:,1]
\end{minted}
The last line normalizes the first projective coordinate in $\mathbb{P}^7$ to be 1. Among \texttt{sols} is the point
$(1:-0.689522:0.928435:-1.35986:0.937652:-1.26254:-1.28671:1.73254)$,
which is the solution displayed on page 8 of \cite{burr2020numerical}. 
\end{example}

\section{Solving equations on Grassmannians} \label{sec:7}

The Grassmannian ${\rm Gr}(k,m)$ is a prime example of a unirational, non-toric variety whose points naturally represent candidate solutions to equations from geometry and applications \cite{huber1998numerical}. Moreover, its Pl\"ucker embedding provides a Khovanskii basis (Example \ref{ex:grasskhov}) and we have an explicit formula for the degree of regularity of complete intersections (Theorem \ref{cor:grass}). Hence, Grassmannians provide an excellent test case for our algorithm. All computations in this section are done using the same hardware as in Example \ref{ex:delpezzocomp}.

In the notation of the Introduction, $X = {\rm Gr}(k,m)$ is an $n$-dimensional projective variety in $\mathbb{P}^{\ell}_K$, with $n = k(m-k)$ and $\ell = \binom{m}{k}-1$. 
The parameterizing functions $\phi_0, \ldots, \phi_\ell$ are the Pl\"ucker coordinates, i.e., the $k \times k$ minors, of the $k \times m$-matrix
\begin{equation} \label{eq:H} H= \begin{pmatrix}
   1& & & &\vrule&   t_1 & t_2 & \dots & t_{m-k-1} & t_{m-k}  \\  & 1 & & & \vrule& t_{m-k+1} & t_{m-k+2} & \cdots & t_{2m-2k-1} & t_{2m-2k} \\ & &\ddots& & \vrule & \vdots & \vdots & \cdots & \vdots & \vdots \\ & &  & 1&\vrule& t_{n-m+k+1}& t_{n-m+k+2} & \dots & t_{n-1} & t_n    
\end{pmatrix}. 
\end{equation}
In this section, we compute Pl\"ucker coordinates of solutions to homogeneous equations in these minors, as in \eqref{eq:system}. We start by solving Schubert problems, following the exposition in \cite[Section 9.3]{sottile2011real}.
A set of \emph{Schubert conditions} on a $(k-1)$-plane $H \in {\rm Gr}(k,m)$ consists of two pieces of data. The first is a set $\alpha \in \binom{[m]}{k}$ consisting of $k$ indices $\{\alpha_1, \ldots, \alpha_k \} \subset [m]=\{1,\dots,m\}$, ordered such that $\alpha_1 < \alpha_2 < \cdots < \alpha_k$. The second is a complete flag $F_\bullet$ in $\mathbb{P}^{m-1}$. That is, $F_\bullet$ is a sequence 
\[ F_\bullet: \,  \ F_1 \, \subset \,  F_2 \, \subset \, \dots \, \subset \,  F_m \, = \, \mathbb{P}^{m-1} \]
of linear subspaces, where $F_i$ has dimension $i-1$.
Such a flag is represented by an $m\times m$ matrix whose $i$ rows span $F_i$ projectively. 
The data $(\alpha, F_\bullet)$ define the \emph{Schubert~variety}
\[ X_\alpha F_\bullet \,  = \,  \{ H\in {\rm Gr}(k,m) \, : \, {\rm dim}_K(H\cap F_{\alpha_i})\geq i-1 \ \, {\rm for} \,  \ i=1,\dots,k \}. \]
This subvariety of ${\rm Gr}(k,m)$ has dimension $D(\alpha)=\alpha_1-1+\alpha_2-2+\dots+\alpha_k-k$. 
Its equations are obtained as follows. We abuse notation slightly and write $F_{\alpha_i}$ for an $\alpha_i \times m$-matrix whose rows span $F_{\alpha_i}$.
The condition ${\rm dim}_K(H\cap F_{\alpha_i})\geq i-1 $ is equivalent to $ {\rm dim}_K(H+F_{\alpha_i})\leq k+\alpha_i-i$. Hence, the defining equations of $X_\alpha F_\bullet$ are given by the vanishing of the $(k + \alpha_i - i +1)$-minors of the $(k+\alpha_i) \times m$-matrix $\begin{pmatrix}
    H \\ F_{\alpha_i}
\end{pmatrix}$, for $i = 1, \ldots, k$. Here, values of $i$ for which such minors do not exist, i.e. $k + \alpha_i - i + 1 > \min(k+\alpha_i,m)$, are skipped. Indeed, for such $i$, the condition $\dim_K(H \cap F_{\alpha_i}) \geq i-1$ is trivial. Note that all these minors are \emph{linear} combinations of Pl\"ucker coordinates, i.e., $d_i = 1$.

A \emph{Schubert problem} is given by a list of Schubert conditions $(\alpha^1, F^1_\bullet),\dots,(\alpha^c, F^c_\bullet)$ such that $\sum_{j=1}^c (n-D(\alpha^j))=n$. Here $c$ can be any positive integer, and the flags $F^j_\bullet$ are assumed to be in general position. The intersection of these Schubert varieties 
\begin{equation} \label{Schuber variety}
    X_{\alpha^1}F^1_\bullet \, \cap \,  X_{\alpha^2}F^2_\bullet \, \cap \, \cdots \, \cap \,  X_{\alpha^c}F^c_\bullet
\end{equation}
consists of finitely many points in ${\rm Gr}(k,m)$. Our task is to compute these points. 

\begin{example}[$k=3, m = 6$] \label{ex Schubert calculus}
    We define a Schubert problem in ${\rm Gr}(3,6)$ by setting $\alpha^1 = \alpha^2 = \alpha^3 =(2,4,6)$ and picking three general flags $F^1_\bullet, F^2_\bullet, F^3_\bullet$. We have 
    \[ (9-6) + (9-6) + (9-6) \, = \, 9 \, = \, n. \]
The three Schubert varieties $X_{\alpha^j}F^j_\bullet $ each have codimension $3$. Their equations are the five $5 \times 5$-minors of the $5 \times 6$ matrix $\binom{H}{F^j_4}$, and 
the seven $6 \times 6$-minors of the $7 \times 6$-matrix $\binom{H}{F^j_4}$. 
This gives 13 equations for each $j$, hence a total of 39  equations on the $9$-dimensional space ${\rm Gr}(3,6)$. Among these, only 18 are linearly independent, so we can delete 21 equations in an easy pre-processing step. The remaining equations cut out the two $2$-planes in $\mathbb{P}_{\overline{K}}^{5}$ that interact in the prescribed way with our three flags.
\begin{minted}{julia}
k = 3; m = 6; n = k*(m-k); K = QQ; A = [[2,4,6], [2,4,6], [2,4,6]];
R, φ, t, M = plueckercoordinates(k,m,K);
Flags, F, leadexps, degs_F = equationsSchubertVariety(A,k,m,K); dreg = 2;
sol = solve_Khovanskii(F,dreg,degs_F,φ,t,leadexps)
\end{minted}
A different Schubert problem in ${\rm Gr}(3,6)$ is given by $\alpha^1 = \cdots = \alpha^9 = (3,5,6)$ and nine general flags. Here each Schubert variety $X_{\alpha^j}F^j_\bullet \subset {\rm Gr}(3,6)$ is a hypersurface defined by a single linear equation in the Pl\"ucker coordinates. The number of points in $\bigcap_{j = 1}^9 X_{\alpha^j} F^j_\bullet$ is the degree of ${\rm Gr}(3,6)$ in its Pl\"ucker embedding, which is 42.

We end this example by solving five Schubert problems on ${\rm Gr}(3,6)$ using our Julia package \texttt{KhovanskiiSolving.jl}. We use all possible combinations of the parameter vectors $\alpha^1 = (3,5,6)$ and $\alpha^2 = (2,5,6)$. The results are shown in Table \ref{tab:schubert_problem_Gr_K(3,6)}. The solutions are computed using the KM matrix $M_X(d_{\rm reg})$, where the value of $d_{\rm reg}$ is determined experimentally if the number of equations is greater than 9. In these cases, the results from Section \ref{sec:4} do not apply. The parameters $t_{K}$ indicate the computation time with ground field $K$. As expected, computations over a finite field are more efficient. The column ${\rm HF}_{K[X]}(d_{\rm reg})$ shows the size of the KM matrix used in the computation. For $9 \times \alpha^1$, this equals $14112$. That computation did not terminate for $K= \mathbb{Q}$ within reasonable time. We chose this example to demonstrate the limits of our implementation.
\end{example}

\begin{table}[h]
\small
    \centering
    \begin{tabular}{c|cccccc}
Schubert  cond. & \# solutions & $d_{\rm reg}$ & ${\rm HF}_{K[X]}(d_{\rm reg})$ & \# equations & 
$t_\mathbb{Q}$ & $t_{\mathbb{F}_{9716633}}$\\ \hline
$9\times \alpha^1$ & 42 & 5 & 14112 & 9 & $\times$  & $15635s$  \\ 
$7\times \alpha^1+1\times \alpha^2$  & 21  & 4 & 4116 & 11 & $12296s$ & $1074s$  \\ 
$5\times \alpha^1+2\times\alpha^2$ & 11 & 3 & 980 & 13 & $598s$ & $51s$  \\ 
$3\times \alpha^1+3\times\alpha^2$ & 6 & 3 & 980 & 15 & $407s$ &$40s$ \\ 
$1\times \alpha^1+4\times\alpha^2$ & 3 & 2 & 175 & 17 & $11s$ & $2s$  \\ 

\end{tabular}
    \caption{Computational results for Schubert problems on ${\rm Gr}(3,6)$.}
    \label{tab:schubert_problem_Gr_K(3,6)}
\end{table}

If $K = \mathbb{C}$, there are particular real flags whose Schubert problems have only real solutions. Let $\gamma(s)=(1,s,s^2,\dots,s^{m-1})\in \mathbb{R}^m$ for $s\in \mathbb{R}$. The \emph{osculating flag} $F_\bullet(s)$ at the point $s\in\C$ is the flag whose $i$-dimensional plane is spanned by $\gamma(s), \gamma'(s), \ldots, \gamma^{(i-1)}(s)$. 
For any $\alpha^1, \ldots, \alpha^c$ defining a Schubert problem and every choice of distinct points $s_j\in \mathbb{R}$, $j=1,\dots,c$, the intersection of the corresponding Schubert varieties
    \[ X_{\alpha^1}F_\bullet(s_1) \, \cap \, X_{\alpha^2}F_\bullet(s_2) \, \cap \,  \dots \, \cap \,  X_{\alpha^c}F_\bullet(s_c)
    \] 
in ${\rm Gr}_{\mathbb{C}}(k,m)$ is finite and transverse, and all its points are real, see \cite[Theorem 9.13]{sottile2011real}. We demonstrate this for the 6-dimensional Grassmannian ${\rm Gr}(2,5)$. 

\begin{example}[$k = 2, m = 5$]
The $2$-planes of the osculating flags $F_\bullet(\pm i)$, $i = 1, 2, 3$ in $\mathbb{P}^4$ are represented by the following six $3 \times 5$-matrices :
\[
\begin{matrix}
\begin{pmatrix}
    1 & \pm 3 & 9 & \pm27 &81\\
    0 & 1 & \pm6 & 27 & \pm 108 \\
    0 & 0 & 2 & \pm 18 & 108
\end{pmatrix}, &
\begin{pmatrix}
    1 & \pm 2 & 4 & \pm 8 &16\\
    0 & 1 & \pm 4 & 12 & \pm3 2\\
    0 & 0 & 2 & \pm 12 & 48
\end{pmatrix}, &
\begin{pmatrix}
    1 & \pm 1 & 1 & \pm 1 &1\\
    0 & 1 & \pm 2 & 3 & \pm4\\
    0 & 0 & 2 & \pm 6 & 12
\end{pmatrix}.
\end{matrix}
\]
The Schubert condition $(3,5)$ imposes that a line $H$ touches each of these $2$-planes. This is equivalent to imposing that the full rank $2 \times 5$-matrix $H$ completes all six of the above matrices to a rank deficient $5 \times 5$-matrix. The resulting linear system in Pl\"ucker coordinates has 5 solutions on ${\rm Gr}_{\mathbb{C}}(2,5)$, and all of them are real. They can be found using \texttt{SolvingOnParameterizedVarieties.jl}. 
One of the solutions is 
\[ \begin{pmatrix}
    1 & 0 & 0 & 2.24227 & -16.3333 \\
    0 & 1 & 0 & -4.66667 & 17.9382
\end{pmatrix}. \qedhere \]
\end{example}

All Schubert problems above give rise to \emph{linear} equations on ${\rm Gr}(k,m)$. That is, the degrees $d_i$ in \eqref{eq:homsystem} are all equal to 1. We now switch to nonlinear equations, which we obtain by replacing the flags $F_\bullet$ by nonlinear objects. Let $Y \subset \mathbb{P}_K^{m-1}$ be an irreducible projective variety of dimension $m-k-1$. We consider the set of $(k-1)$-planes
\[ \{ H \in {\rm Gr}(k,m) \, : \, H \cap Y \neq \emptyset \}\]
intersecting $Y$ in at least one point. This is an irreducible hypersurface in ${\rm Gr}(k,m)$ \cite[Theorem 1.1]{dalbec1995introduction}. Its unique defining equation is called the \emph{Chow form} of $Y$. We denote it by ${\rm Ch}(Y) \in K[{\rm Gr}(k,m)]$. The Chow form can be computed from a set of equations of $Y$ using steps 0-4 in \cite[Section 3.1]{dalbec1995introduction}. In fact, for our purposes, it suffices to stop after step 3, which expresses ${\rm Ch}(Y)$ as a polynomial in the coordinates $t_i$ from \eqref{eq:H}. This polynomial is an element of the subalgebra of $K[t_1, \ldots, t_n]$ generated by the Pl\"ucker coordinates. Using these Chow forms for $f_i$ in \eqref{eq:system}, we can use our approach to answer questions of the type \emph{``Which $(k-1)$-planes intersect a given list of $(m-k-1)$-dimensional varieties $Y_1, \ldots, Y_n \subset \mathbb{P}^{m-1}$?''}. Here is an example for curves in three-space. 
\begin{example}[$k = 2, m = 4$]
    In this example, we work over $K = \mathbb{F}_{9716633}$. We generate curves of degree 4 in $\mathbb{P}^3$ by intersecting two generic conics in $K[x_0,x_1,x_2,x_3]_2$. Similarly, we construct degree 2 curves by intersecting a generic conic with a generic linear form. The Chow forms of such a quartic curves have degree 4 in $K[{\rm Gr}(2,4)]$, while that of the quadratic curve has degree 2. For $q = 0, 1, 2, 3, 4$, we build a system of $4$ equations on ${\rm Gr}(2,4)$ as follows. For $f_1, \ldots, f_q$, we use the Chow forms of $q$ degree 4 curves. The remaining equations $f_{q+1}, \ldots, f_4$ are the Chow forms of $4-q$ degree 2 curves. By solving these equations, we answer the question \emph{``Which lines in $\mathbb{P}_K^3$ intersect $q$ given quartic curves and $4-q$ given quadratic curves?''.} By the results of Section \ref{sec:4}, the parameter $d_{\rm reg}$ is given by $2q+6$. The results are reported in Table \ref{tab:chowforms}.
    \begin{table}[h]
    \centering
    \small
    \begin{tabular}{c|cccccc}
$q$ & \# solutions & $d_{\rm reg}$ & ${\rm HF}_{K[X]}(d_{\rm reg})$ & \# equations & 
$t_\mathbb{Q}$ & $t_{\mathbb{F}_{9716633}}$\\ \hline
$0$ & 32 & 6  & 336  & 4 & $39s$ & $4s$ \\ 
$1$  & 64  & 8 & 825   & 4 & $711s$  & $36s$  \\ 
$2$ & 128 & 10 & 1716  & 4 & $6833s$  & $247s$  \\ 
$3$ & 256 & 12 & 3185  & 4 & $55319s$ & $1336s$ \\ 
$4$ & 512 & 14 & 5440  & 4 & $\times$  &  $5572s$ \\ 
\end{tabular}
    \caption{Computational results for intersecting Chow forms on ${\rm Gr}(2,4)$.}
    \label{tab:chowforms}
\end{table}
\end{example}

In conclusion, this article proposes to use Khovanskii-Macaulay matrices in computer algebra methods for solving polynomial equations. This is natural in the case where the equations lie in a subalgebra with a finite Khovanskii basis, which happens, for instance, for equations on Grassmannians. The matrices then represent graded pieces of the corresponding ideal in the coordinate ring of the unirational variety.
We have laid the first theoretical foundations, including an Eigenvalue Theorem (Theorem \ref{thm:eigenvaluethm2}) and a more generally applicable regularity result (Theorem \ref{thm:regfromhreg}). Our Julia implementation and the examples in Sections \ref{sec:5} and \ref{sec:6} provide evidence for the effectiveness of this strategy. However, this first implementation has its limits. Directions for future research include an optimized implementation, the development of these methods in finite precision arithmetic for efficient and robust numerical computations, and establishing determinantal formulas for resultants on $X$ using Khovanskii-Macaulay matrices.

\paragraph{Acknowledgements.}
We would like to thank Elisa Gorla, Frank Sottile and Bernd Sturmfels for helpful discussions. Simon Telen was supported by a Veni grant from the Netherlands Organisation for Scientific Research (NWO).

\small
\bibliographystyle{abbrv}
\bibliography{references.bib}

\medskip 

\noindent{\bf Authors' addresses:}
\medskip

\noindent Barbara Betti, MPI-MiS Leipzig \hfill{\tt barbara.betti@mis.mpg.de}

\noindent Marta Panizzut, MPI-MiS Leipzig \hfill{\tt marta.panizzut@mis.mpg.de}

\noindent Simon Telen, MPI-MiS Leipzig and CWI Amsterdam
\hfill {\tt simon.telen@mis.mpg.de}

\end{document}